\documentclass[11pt]{amsart}
\usepackage[margin=1in]{geometry}  
\usepackage{graphicx,upref}

\usepackage{amsthm}

\usepackage{mathtools, tikz, tikz-cd}
\usepackage{amsmath, amssymb, amsfonts, amsxtra}
\usepackage[colorlinks,
             linkcolor=black!75!red,
             citecolor=blue,
             pdftitle={},
             pdfauthor={},
             pdfproducer={pdfLaTeX},
             bookmarksopen=true,
             bookmarksnumbered=true]{hyperref}

\newtheorem{theorem}{Theorem}[section]
\newtheorem{proposition}[theorem]{Proposition}
\newtheorem{lemma}[theorem]{Lemma}
\newtheorem{corollary}[theorem]{Corollary}

\theoremstyle{definition}
\newtheorem{definition}[theorem]{Definition}
\newtheorem{example}[theorem]{Example}
\newtheorem{remark}[theorem]{Remark}


\setcounter{tocdepth}{1}


\title{Admissibility of C*-Covers for Operator Algebra Dynamical Systems} 
\author{Mitch Hamidi}  
\address{Department of Mathematics, Embry-Riddle Aeronautical University, Prescott, AZ 86301-3720, USA} 
\email{hamidim@erau.edu}

\keywords{$\al$-admissibility, C*-cover, crossed product, operator algebra, non-selfadjoint}

\subjclass[2010]{47L55, 46L55, 47L65}


\usepackage[mathscr]{euscript}


\newcommand{\Z}{\mathbb{Z}}
\newcommand{\C}{\mathbb{C}}

\newcommand{\T}{\mathbb{T}} 
\newcommand{\D}{\mathbb{D}} 


\newcommand{\al}{\alpha}
\newcommand{\ep}{\varepsilon}

\newcommand{\lp}{\left(}
\newcommand{\rp}{\right)}

\newcommand{\set}[1]{\left\{ #1 \right\}}

\newcommand{\norm}[1]{\left\lVert#1\right\rVert}

\newcommand{\isom}{\cong}

\newcommand{\Aut}{\text{Aut}} 
 
\newcommand{\semi}{\rtimes} 
\newcommand{\Hil}{\mathcal{H}} 
\newcommand{\BH}{\mathcal{B}(\Hil)}

\newcommand{\A}{\mathcal{A}}
\newcommand{\B}{\mathcal{B}}

\newcommand{\Cmax}[1]{C^*_\text{max}( #1)} 
\newcommand{\Ce}[1]{C^*_{e}( #1)} 

\newcommand{\Cc}{\mathcal{C}} 
\newcommand{\J}{\mathcal{J}} 
\newcommand{\imax}{i_{\text{max}}}
\newcommand{\imin}{i_{\text{min}}}

\begin{document} 
 
\begin{abstract}  
We characterize when a C*-cover admits a C*-dynamical extension of dynamics on an operator algebra in terms of the boundary ideal structure for the operator algebra in its maximal representation and show that the C*-covers that admit such an extension form a complete lattice. We study dynamical systems arising from groups acting via inner automorphisms in a C*-cover and produce an example of a C*-cover that admits no extension of dynamics on a finite-dimensional non-self-adjoint operator algebra. We construct a partial action on a class of C*-covers that recovers the crossed product of an operator algebra as a subalgebra of the partial crossed product, even when the C*-cover admits no dynamical extension.
\end{abstract} 
\maketitle


 
\section{Introduction}

In \cite{Kat-1}, E. Katsoulis and C. Ramsey introduced the construction of a non-self-adjoint crossed product that encodes the action of a group of automorphisms on an operator algebra. The construction is a tool for generating examples of non-self-adjoint operator algebras and provides a framework to answer open problems in both non-self-adjoint and self-adjoint operator algebra theory. For example:
\begin{itemize}
    \item the \textit{Hao-Ng isomorphism problem} was resolved for (i) reduced crossed products and all discrete groups in \cite{Kat-haong-2017} and (ii) reduced crossed products and all groups on hyperrigid C*-correspondences in \cite[Theorem 3.9]{Kat-3},
    \item a program to resolve the \textit{Hao-Ng isomorphism problem} for full crossed products was established in \cite{Kat-3} -- see Theorem 4.9,
    \item Takai duality has been generalized to arbitrary operator algebra crossed products \cite[Theorem 4.4]{Kat-1},
    \item it was shown in \cite[Corollary 5.15]{Kat-1} that there exist semi-Dirichlet operator algebras that are not Dirichlet and do not arise as the tensor algebra of a C*-correspondence.
\end{itemize}

E. Katsoulis and C. Ramsey's crossed product stands apart from other non-self-adjoint crossed product constructions such as those motivated by W. Arverson in the 1960s \cite{Arveson-semicrossed-1,Arveson-semicrossed-2} and formalized by J. Peters in the 1980s \cite{Peters-84}. These particular non-self-adjoint crossed products, called a \textit{semicrossed products}, encode the action of a semigroup of endomorphisms on an operator algebra. While the impact of \cite{Arveson-semicrossed-1,Arveson-semicrossed-2,Peters-84} and subsequent work by many authors has had broad impact, the current paper focuses on the ``road less traveled'' -- actions of groups on non-self-adjoint operator algebras.

Given an action $\al:G\curvearrowright \A$ of a group $G$ of completely isometric automorphisms on an operator algebra $\A$ and a completely isometric representation $j:\A\to \Cc$ of $\A$ into a C*-algebra $\Cc=C^*(j(\A))$, the crossed product of $\A$ by $G$ is defined to be the norm-closure of $C_c(G,j(\A))$ in a C*-crossed product
of $\Cc$ by $G$, pending $\al$ extends to an appropriate action of $G$ on $\Cc$. When such an extension exists, the \textit{C*-cover} $(\Cc,j)$ is called \textit{$\al$-admissible}. The existence of such dynamical extensions in the \textit{extremal representations} (Remark \ref{extremal-reps}) of an operator algebra is sufficient in E. Katsoulis and C. Ramsey's original work, but little is known about the existence of admissible extensions in the non-extremal setting; it is unclear which, if any, non-extremal C*-covers are $\al$-admissible.

In this paper, we develop a program for determining the $\al$-admissibility of a given C*-cover. Section \ref{sec:preliminaries} contains preliminary information about $C*$-covers, $\al$-admissibility of C*-covers, and non-self-adjoint crossed products. This section draws from \cite{Kat-1}, but is included for the reader's convenience. In Section \ref{sec:structure-of-cstar-covers}, we compile some basic theory and folklore results for C*-covers and prove that the set of equivalence classes of $\al$-admissible C*-covers forms a complete lattice. The main result of Section \ref{sec:structure-of-cstar-covers} is Theorem \ref{thm:admisscorrespondence}, which characterizes $\al$-admissibility of a C*-cover in terms of boundary ideals that are invariant under $\al$:

\begingroup
\renewcommand{\thetheorem}{\ref{thm:admisscorrespondence}}
\begin{theorem}
Let $(\A, G, \al)$ be a dynamical system and let $(\mathcal{D}, i)$ be an $\al$-admissible C*-cover for $\A$. If $(\mathcal{C},j)$ is any C*-cover for $\A$ such that there exists a $*$-homomorphism $\pi$ of $\mathcal{D}$ onto $\Cc$ satisfying $\pi\circ i = j$, i.e. $(\mathcal{C},j) \preceq (\mathcal{D},i)$, then the following are equivalent:
\begin{enumerate}
\item  $(\mathcal{C}, j)$ is $\al$-admissible.
\item $\ker \pi$ is an $\al$-invariant ideal in $\mathcal{D}$. 
\end{enumerate}
\end{theorem}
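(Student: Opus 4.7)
The plan is to establish the equivalence by a standard quotient/descent argument, with the intertwining relation $\pi \circ \al_g^{\mathcal{D}} = \al_g^{\mathcal{C}} \circ \pi$ as the central object.

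For (ii) $\Rightarrow$ (i), assuming $\ker\pi$ is $\al$-invariant in $\mathcal{D}$, for each $g \in G$ the *-automorphism $\al_g^{\mathcal{D}}$ descends through the quotient $\mathcal{D}/\ker\pi \cong \mathcal{C}$ to a well-defined *-automorphism $\al_g^{\mathcal{C}} : \mathcal{C} \to \mathcal{C}$ via $\al_g^{\mathcal{C}}(\pi(d)) := \pi(\al_g^{\mathcal{D}}(d))$. Well-definedness is precisely the $\al$-invariance hypothesis on $\ker\pi$; bijectivity follows by running the same construction for $g^{-1}$. To see that this extends $\al$ via $j$, I compute $\al_g^{\mathcal{C}}(j(a)) = \al_g^{\mathcal{C}}(\pi(i(a))) = \pi(\al_g^{\mathcal{D}}(i(a))) = \pi(i(\al_g(a))) = j(\al_g(a))$. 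The group law and the appropriate point-norm continuity requirement for $\al^{\mathcal{C}}$ both descend from $\al^{\mathcal{D}}$ through the surjective *-homomorphism $\pi$, which is itself contractive.

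For (i) $\Rightarrow$ (ii), suppose $(\mathcal{C}, j)$ is $\al$-admissible with some extension $\al^{\mathcal{C}}$. I claim the intertwining relation $\pi \circ \al_g^{\mathcal{D}} = \al_g^{\mathcal{C}} \circ \pi$ holds on all of $\mathcal{D}$; granted this, for $d \in \ker\pi$ we immediately get $\pi(\al_g^{\mathcal{D}}(d)) = \al_g^{\mathcal{C}}(\pi(d)) = 0$, so $\ker\pi$ is $\al^{\mathcal{D}}$-invariant. To prove the claim, observe that both sides are *-homomorphisms $\mathcal{D} \to \mathcal{C}$, so it suffices to verify agreement on a generating set. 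On $i(\A)$, the computation
\[
\pi(\al_g^{\mathcal{D}}(i(a))) = \pi(i(\al_g(a))) = j(\al_g(a)) = \al_g^{\mathcal{C}}(j(a)) = \al_g^{\mathcal{C}}(\pi(i(a)))
\]
uses only that each extension restricts to $\al$ on $\A$ and that $\pi \circ i = j$.

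The subtlety to watch for here is the non-self-adjointness of $\A$: agreement of two *-homomorphisms on the operator algebra $i(\A)$ does not obviously force agreement on $C^*(i(\A)) = \mathcal{D}$. This is resolved by the fact that *-homomorphisms preserve adjoints, so agreement on $i(\A)$ automatically yields agreement on $i(\A)^*$ and hence on the generated C*-algebra $\mathcal{D}$. Once the intertwining relation is in place, both implications are essentially formal, and as a byproduct one sees that the extension $\al^{\mathcal{C}}$ in (i) is uniquely determined by $\al^{\mathcal{D}}$ and the quotient map $\pi$.
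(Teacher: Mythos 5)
Your proposal is correct and follows essentially the same route as the paper: establish the intertwining relation $\pi\circ\beta_s=\tilde{\al}_s\circ\pi$ on all of $\mathcal{D}$ from its validity on the generating set $i(\A)$ (the paper does this via monomials in $i(\A)$ and $i(\A)^*$ plus density, which is the same mechanism as your ``two $*$-homomorphisms agreeing on generators'' shortcut), deduce invariance of $\ker\pi$, and conversely descend each $\beta_s$ through the quotient $\mathcal{D}/\ker\pi\isom\mathcal{C}$. No gaps; your handling of the non-self-adjointness issue matches the paper's.
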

\endgroup

Section \ref{sec:inner-dynamical} focuses on group actions arising from automorphisms that are inner in some C*-cover. We prove that the existence of such an action yields $\al$-admissibility in Theorem \ref{thm:locally-inner-admiss}. 

Section \ref{sec:ex-non-admiss} marks a departure from the previous sections by producing the first finite-dimensional example of a non-admissible C*-cover. The example highlights that $\al$-admissibility is far from guaranteed for a C*-cover. In particular, we construct a non-extremal finite-dimensional C*-cover that is not admissible for a dynamical system arising from the action of $\Z/2\Z$ by automorphisms inner to the C*-envelope.

We conclude with Section \ref{sec:partial-actions} by suggesting a method to recover dynamics in a potentially non-admissible C*-cover. Our main result is Theorem \ref{thm:partial-action} which gives sufficient conditions on the Shilov boundary ideal for the crossed product of an operator algebra to be recovered via a partial action on a potentially non-admissible C*-cover:

\begingroup
\renewcommand{\thetheorem}{\ref{thm:partial-action}}
\begin{theorem}
Let $G$ be a discrete group and let $(\A, G, \al)$ be a dynamical system. Suppose $(\mathcal{C},j)$ is a  C*-cover for $\mathcal{A}$ such that the Shilov boundary ideal $\J$ for $\mathcal{A}$ in $\mathcal{C}$ is maximal and not essential. If $G$ is amenable or $\A$ is Dirichlet, then there exists a partial action $\theta: G \curvearrowright \mathcal{C}$ and a norm closed subalgebra $\B \subseteq \mathcal{C}\semi_\theta G$ such that $\B$ is completely isometrically isomorphic to $\A\semi_\al G.$
\end{theorem}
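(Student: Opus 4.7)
The plan is to exploit the structural constraints that the hypotheses on $\J$ impose, reducing the problem to a global action on a direct summand of $\Cc$.

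First, non-essentiality and maximality of $\J$ force a direct-sum decomposition of $\Cc$. Non-essentiality supplies a nonzero ideal $\mathcal{I} \subseteq \Cc$ with $\mathcal{I} \cap \J = 0$, and maximality of $\J$ then forces $\J + \mathcal{I} = \Cc$; hence $\Cc \isom \J \oplus \mathcal{I}$, and $\mathcal{I} \isom \Cc/\J \isom \Ce{\A}$. In particular, the projection $p : \Cc \to \mathcal{I}$ restricts to the canonical C*-envelope embedding $\iota := p \circ j : \A \to \mathcal{I}$.

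Next I build the partial action. Because the C*-envelope is always $\al$-admissible, $\al$ extends to a global action $\tilde{\al} : G \curvearrowright \mathcal{I}$. Define $\theta : G \curvearrowright \Cc$ by $D_e = \Cc$, $\theta_e = \text{id}$, and for $g \neq e$ set $D_g = \mathcal{I}$ and $\theta_g = \tilde{\al}_g$; the partial-action axioms are immediate since $\tilde{\al}$ is a genuine action on the invariant summand $\mathcal{I}$. In the resulting partial crossed product $\Cc \semi_\theta G$, the summand $\J$ lies outside every $D_g$ for $g \neq e$, so $u_g \J = \J u_g = 0$. A direct check against the universal property then shows $\Cc \semi_\theta G \isom \J \oplus (\Ce{\A} \semi_{\tilde{\al}} G)$, with the second summand an ordinary C*-crossed product. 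Take $\B$ to be the norm-closed operator algebra generated inside this splitting by $\{\iota(a) u_g : a \in \A,\, g \in G\}$; by construction $\B$ sits entirely in the $\Ce{\A} \semi_{\tilde{\al}} G$ summand and is the non-self-adjoint crossed product of $\A$ by $G$ computed using the admissible C*-cover $\Ce{\A}$.

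The main obstacle is the final identification of $\B$ with the canonical crossed product $\A \semi_\al G$, i.e.\ proving that the operator algebra crossed product is independent of the admissible C*-cover used to compute it. This is exactly where the standing hypotheses enter. When $G$ is amenable, full and reduced C*-crossed products coincide, and standard averaging arguments over $G$ show that any two admissible C*-covers of $\A$ yield the same operator algebra crossed product, so $\B \isom \A \semi_\al G$. When $\A$ is Dirichlet, the density of $\A + \A^*$ in every admissible C*-cover enables a direct norm comparison between $\|\sum_g a_g u_g\|$ computed in $\Ce{\A} \semi_{\tilde{\al}} G$ and in $\Cmax{\A} \semi_\al G$, again forcing a completely isometric agreement. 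Either hypothesis therefore produces the required completely isometric isomorphism $\B \isom \A \semi_\al G$.
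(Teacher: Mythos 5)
Your proposal is correct and follows essentially the same route as the paper: the same decomposition $\Cc\isom\Ce{\A}\oplus\J$ forced by maximality and non-essentiality of $\J$, the same partial action with $D_e=\Cc$ and $D_s=\Ce{\A}\oplus 0$ for $s\ne e$, and the same reliance on Katsoulis--Ramsey's results that all relative crossed products coincide when $G$ is amenable or $\A$ is Dirichlet (the paper simply cites their Theorems 3.14 and 5.5 rather than reproving them; be aware your sketch of the Dirichlet case is off as stated, since $\A+\A^*$ is dense only in $\Ce{\A}$, not in an arbitrary admissible cover). The only real difference is at the finish: you identify $\Cc\semi_\theta G\isom\J\oplus\lp\Ce{\A}\semi_{\tilde{\al}}G\rp$ outright and read off $\B$ as the relative crossed product over the envelope, whereas the paper constructs explicit covariant pairs $(\pi,U)$ and $(\varphi\circ j_1,V)$ whose integrated forms give mutually inverse complete contractions between $\B$ and $\A\semi_\al G$ --- both are valid, and yours is slightly slicker provided you actually verify the direct-sum splitting of the partial crossed product.
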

\endgroup

\begin{remark}
    While this paper was in preparation, the author has been delighted to see a growth of interest in C*-covers as objects of study in their own right. Several authors have announced exciting work (\cite{thompson2023maximal}, \cite{humeniuk2023lattice}, \cite{humeniuk2023crossed}) that builds upon results from the author's PhD thesis that appear in this paper. 
\end{remark}

\section{Preliminaries}\label{sec:preliminaries}

\subsection{Operator Algebras and C*-Covers}
We assume that the reader is familiar with the basic theory of abstract operator algebras. See \cite{Blecher-1} for a comprehensive treatment. Let $(\A, G, \al)$ be an (operator algebra) \textit{dynamical system}, i.e., $\A$ is an approximately unital operator algebra, $G$ is a locally compact Hausdorff group, and $\al:G \to \Aut(\A)$ is a strongly continuous group representation such that $\al_s$ is a completely isometric automorphism for all $s\in G$. A \textit{C*-cover} for $\A$ is a pair $(\Cc, j)$, where $\Cc$ is a C*-algebra and $j:\A \to \Cc$ is a completely isometric homomorphism satisfying $\Cc=C^*(j(\A))$. 

Given two C*-covers $(\Cc_k, j_k)$, $k=1,2$, for $\A$, we say $(\Cc_1, j_1)$ is \textit{below} $(\Cc_2,j_2)$ or $(\Cc_2,j_2)$ is \textit{above} $(\Cc_1,j_1)$, both denoted $(\Cc_1, j_1) \preceq (\Cc_2, j_2)$, if there exists a (necessarily surjective) $*$-homomorphism $\pi:\Cc_2 \to \Cc_1$ such that $\pi \circ j_2 = j_1$. We say two C*-covers $(\Cc_k, j_k)$, $k=1,2$, for $\A$ are \textit{equivalent}, denoted $(\Cc_1, j_1)\simeq (\Cc_2, j_2)$, if there exists a $*$-isomorphism $\pi:\Cc_2 \to \Cc_1$ satisfying $\pi \circ j_2 = j_1$. It is easy to see that this gives an equivalence relation on C*-covers for $\A$ and that $\preceq$ gives a partial ordering on equivalence classes of C*-covers for $\A.$

It is well-known that there exist maximal and minimal (equivalence classes of) C*-covers for $\A$, denoted $\Cmax\A \equiv (\Cmax\A, \imax)$ and $\Ce\A\equiv (\Ce\A, \imin)$, which are defined by their respective universal properties. See Definition 2.4.3 and Theorem 4.3.1 in \cite{Blecher-1} for details. 

\begin{remark}\label{extremal-reps}
In this paper, we refer to $\Cmax\A$ and $\Ce\A$ as the \textit{extremal representations} for an operator algebra $\A.$ Any C*-cover for $\A$ strictly below $\Cmax\A$ and strictly above $\Ce\A$ will sometimes be referred to as a \textit{non-extremal representation} for $\A.$
\end{remark}

Given a C*-cover $(\Cc, j)$ for $\A$, an ideal $J$ in $\Cc$ is a \textit{boundary ideal} for $\A$ in $\Cc$ if the canonical quotient map $q:\Cc \to \Cc/J$ is completely isometric on $j(\A)$. The existence of the equivalence class $\Ce\A$, called the \textit{C*-envelope for $\A$}, is due to the existence of a maximal (with respect to inclusion) boundary ideal for $\A$ in any C*-cover for $\A$. In \cite{Arv-69}, W. Arveson named such an ideal $\J \subseteq \Cc$ the \textit{Shilov boundary ideal} for $\A$ in $\Cc$ and one can show that $(\Cc/\J, q\circ j)\simeq \Ce\A$. See \cite{Ham-79,Drit-Mccull,Arv-08,Davidson-Kennedy} for details about the existence of $\J$ and $\Ce{\A}$. 

\subsection{$\al$-Admissibility and Crossed Products of Operator Algebras}

We summarize the basic construction of a crossed product introduced in \cite{Kat-1}. Given a dynamical system $(\A, G, \al)$, a C*-cover $(\Cc, j)$ for $\A$ is called \textit{$\al$-admissible} if there exists a (necessarily unique -- \cite[Lemma 2.2]{Kat-3}) action of $G$ on $\Cc$ by $*$-automorphisms, $\tilde{\al}:G \to \Aut(\Cc)$, such that for each $s \in G$, the following diagram commutes.

\begin{center}
\begin{tikzcd}
\mathcal{C} \arrow{r}{\tilde{\al}_s}& \mathcal{C} \\
\A \arrow{u}{j} \arrow{r}[swap]{\al_s} & \A\arrow{u}[swap]{j}\\
\end{tikzcd}
\end{center}

E. Katsoulis and C. Ramsey prove in \cite{Kat-1} that $\Ce\A$ and $\Cmax\A$ are always $\al$-admissible due to their respective universal properties.

\begin{definition}
Let $(A, G, \al)$ be a dynamical system and suppose $(\Cc,j)$ is an $\al$-admissible C*-cover for $\A.$ 
\begin{itemize}
    \item The \textit{full crossed product relative to $\Cc$} is the norm closure of $C_c(G, j(\A))$ in the full C*-crossed product $\Cc\semi_{\tilde{\al}}G$, denoted $\A\semi_{(\Cc,j),\al}G$.
    \item The \textit{reduced crossed product relative to $\Cc$} is the norm closure of $C_c(G, j(\A))$ in the reduced C*-crossed product $\Cc\semi_{\tilde{\al}}^rG$, denoted $\A\semi_{(\Cc,j),\al}^r G$.
\end{itemize}
\end{definition}

A \textit{covariant representation} of a dynamical system $(\A, G,\al)$ is a triple $(\pi, U, \Hil)$ consisting of a Hilbert space $\Hil$, a strongly continuous unitary group representation $U$ of $G$ on $\Hil$, and a non-degenerate, completely contractive representation $\pi:\A \to \BH$ satisfying $$U_s\pi(a)U_s^*=\pi(\al_s(a)) \quad \text{for all $s\in G$, $a \in \A$}.$$

The \textit{full crossed product} $\A\semi_\al G$ is defined to be the full crossed product relative to $\Cmax\A$, i.e., $\A\semi_\al G:=\A\semi_{\Cmax\A,\al}G.$ This definition is justified in \cite{Kat-1} as $\A\semi_{\Cmax\A,\al}G$ is the universal operator algebra with respect to covariant representations for $(\A, G, \al)$. 

A \textit{regular covariant representation} of $(\A, G, \al)$ is a (necessarily covariant) representation $(\bar{\pi}, \lambda_\Hil, L^2(G,\Hil))$ induced from some completely contractive representation $\pi:\A\to\BH$, where $\bar{\pi}:\A\to \B(L^2(G,\Hil))$ and $\lambda_\Hil:G \to \mathcal{U}(L^2(G,\Hil))$ are given by $$\bar{\pi}(a)f(t)=\pi(\al_t^{-1}(a))f(t), \quad \lambda_\Hil(s)f(t) = f(s^{-1}t)\quad \text{for all $t\in G, f\in L^2(G, \Hil)$}$$

E. Katsoulis and C. Ramsey prove in \cite[Corollary 3.16]{Kat-1} that every reduced crossed product is isometrically isomorphic, i.e., the reduced crossed product is independent of $\al$-admissible C*-cover. Thus, the \textit{reduced crossed product}, denoted $\A \semi_\al^r G$, is defined to be this object. It is often convenient to identify $\A\semi_\al^r G \equiv \A\semi_{\Ce\A,\al}^r G$ and one can verify that $\A\semi_\al^r G$ is the universal operator algebra with respect to regular covariant representations of $(\A, G, \al).$

\section{Structure of C*-Covers}\label{sec:structure-of-cstar-covers}

The goal of this section is to examine the structure of $\al$-admissible C*-covers for an operator algebra $\A$. We begin by examining the structure of C*-covers for $\A$.

\subsection{The Complete Lattice of C*-Covers}\label{sec:ccover-lattice}

Let $\A$ be an approximately unital operator algebra. Using notation from \cite{humeniuk2023lattice}, define C*-Lat($\A$) to be the set of equivalence classes of C*-covers for $\A$, which is partially ordered with respect to the partial ordering of equivalence classes for C*-covers introduced in Section \ref{sec:preliminaries}.

Recall $\text{C*-Lat($\A$)}$ contains a maximal and minimal element, $\Cmax\A$ and $\Ce\A$, respectively. The existence of $\Cmax\A$ and $\Ce\A$ yields a 1-1 correspondence between C*-covers for $\A$ (up to C*-cover equivalence) and boundary ideals $J$ for $\A$ in $\Cmax\A$.

\begin{proposition}\label{prop:1-1-corr-for-boundary-ideals}
There exists a 1-1 correspondence between equivalence classes of C*-covers for $\A$ and boundary ideals for $\A$ in $\Cmax\A$.
\begin{proof}
If $J$ is a boundary ideal for $\A$ in $\Cmax\A$ and $q:\Cmax\A \to \Cmax\A/J$ is the natural quotient map, it is clear that $(\Cmax\A/J, q\circ \imax)$ is a C*-cover for $\A$. Likewise, if $(\Cc, j)$ is a C*-cover for $\A$, the universal property for $\Cmax\A$ says there exists a $*$-homomorphism $\pi:\Cmax\A \to \Cc$ such that $\pi \circ \imax= j$, which implies $\ker\pi$ is a boundary ideal since $\Cc \isom \Cmax\A/\ker\pi$ and $j=\pi\circ \imax$ is a complete isometry.

One can check that this correspondence is well-defined on equivalence classes using the universal property of $\Cmax{A}$ and then chasing the appropriate diagram.
\end{proof}
\end{proposition}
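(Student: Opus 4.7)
The plan is to construct explicit maps in both directions between boundary ideals of $\Cmax\A$ and C*-covers of $\A$, and then verify they descend to mutually inverse bijections on equivalence classes. All of the work amounts to repeated application of the universal property of $\Cmax\A$.

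First, to each boundary ideal $J \subseteq \Cmax\A$ I assign the cover $(\Cmax\A/J,\, q_J \circ \imax)$, where $q_J$ is the quotient $*$-homomorphism. This is indeed a C*-cover: $q_J \circ \imax$ is completely isometric by the very definition of boundary ideal, and the image of $\A$ generates $\Cmax\A/J$ as a C*-algebra because $\imax(\A)$ generates $\Cmax\A$ and $q_J$ is a surjective $*$-homomorphism.

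In the reverse direction, given a C*-cover $(\Cc, j)$, the universal property of $\Cmax\A$ provides a unique surjective $*$-homomorphism $\pi_j : \Cmax\A \to \Cc$ satisfying $\pi_j \circ \imax = j$, and I would assign $\ker \pi_j$. This is a boundary ideal because the induced $*$-isomorphism $\overline{\pi_j}: \Cmax\A/\ker\pi_j \to \Cc$ intertwines $q_{\ker\pi_j}$ with $\pi_j$, so $q_{\ker\pi_j} \circ \imax = \overline{\pi_j}^{-1} \circ j$ is completely isometric. To see this assignment descends to equivalence classes, if $(\Cc_1, j_1) \simeq (\Cc_2, j_2)$ via some $*$-isomorphism $\phi$ with $\phi \circ j_2 = j_1$, then $\phi \circ \pi_{j_2}$ and $\pi_{j_1}$ are two $*$-homomorphisms $\Cmax\A \to \Cc_1$ that agree on $\imax(\A)$; uniqueness in the universal property forces $\phi\circ \pi_{j_2} = \pi_{j_1}$, hence $\ker \pi_{j_1} = \ker \pi_{j_2}$.

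Finally, I would check the two assignments are inverse. Beginning with a boundary ideal $J$, the cover $(\Cmax\A/J,\, q_J \circ \imax)$ visibly admits $q_J$ itself as the universal map, so the recovered ideal is $\ker q_J = J$. Beginning with $(\Cc, j)$, the $*$-isomorphism $\overline{\pi_j}$ above witnesses $(\Cmax\A/\ker\pi_j,\, q_{\ker\pi_j} \circ \imax) \simeq (\Cc, j)$. The main obstacle, if any, is purely organizational: ensuring every construction is well-defined on equivalence classes and keeping the universal-property diagrams straight. I do not anticipate genuine technical difficulty beyond this bookkeeping.
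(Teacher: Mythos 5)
Your proposal is correct and follows essentially the same route as the paper: quotient by a boundary ideal in one direction, kernel of the universal $*$-homomorphism from $\Cmax{\A}$ in the other. The only difference is that you explicitly carry out the well-definedness on equivalence classes and the mutual-inverse check, which the paper leaves to the reader.
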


The existence of the Shilov boundary ideal $\J$ in $\Cmax{\A}$ yields a lattice structure for boundary ideals for $\A$ in $\Cmax\A$ and one can check that the 1-1 correspondence in Proposition \ref{prop:1-1-corr-for-boundary-ideals} is an anti-lattice isomorphism, which implies $\text{C*-Lat($\A$)}$ has a lattice structure corresponding to the lattice structure of boundary ideals for $\A$ in $\Cmax\A$. This is mentioned by D. Blecher et. al at the end of \cite{BMQ-Morita} and Remark 2.4 of \cite{Blecher-1}. 

To establish notation and ensure this work self-contained, we define the appropriate meet and join operations that make $\text{C*-Lat($\A$)}$ a complete lattice. These operations were first studied in the author's PhD thesis (\cite{Hamidiphd}) and by I. Thompson in \cite{thompson2023maximal}. The theory of the complete lattice of C*-covers under these operations has since been advanced by A. Humeniuk and C. Ramsey in \cite{humeniuk2023lattice}.


We begin by showing the direct sum of completely isometric representations of an operator algebra generates a C*-cover and that C*-cover is an upper bound for each summand of the representation.

\begin{proposition}\label{thm:cjoin} Let $\mathscr{S} =\set{(\mathcal{C}_\lambda, j_\lambda): \lambda \in \Lambda}$ be a collection of C*-covers for $\A$ and define $j:=\bigoplus_\lambda j_\lambda$. Then $(C^*(j(\A)), j)$ is a C*-cover for $\A$ that is the supremum of $\mathscr{S}$.
\end{proposition}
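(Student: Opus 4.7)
My plan is to verify three things in turn: that $j := \bigoplus_\lambda j_\lambda$ is a completely isometric homomorphism (so $(C^*(j(\A)), j)$ is a bona fide C*-cover); that this C*-cover dominates each $(\mathcal{C}_\lambda, j_\lambda)$; and finally that it is the least such upper bound in $\text{C*-Lat}(\A)$. The natural ambient object is the $\ell^\infty$-product $\prod_\lambda \mathcal{C}_\lambda$ rather than the $c_0$-sum, since $\|j_\lambda(a)\| = \|a\|$ does not decay along $\Lambda$ when $\Lambda$ is infinite. With this convention the first step is immediate: $\|[j(a_{ij})]\| = \sup_\lambda \|[j_\lambda(a_{ij})]\| = \|[a_{ij}]\|$ at every matrix level, so $j$ is completely isometric and the definition of $C^*(j(\A))$ makes $(C^*(j(\A)), j)$ a C*-cover for $\A$.

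For the upper-bound step, I would use the canonical coordinate projections $\pi_\mu : \prod_\lambda \mathcal{C}_\lambda \to \mathcal{C}_\mu$. Restricted to $C^*(j(\A))$, each $\pi_\mu$ is a $*$-homomorphism whose image is a C*-subalgebra containing $\pi_\mu(j(\A)) = j_\mu(\A)$ and hence equals $C^*(j_\mu(\A)) = \mathcal{C}_\mu$. Since $\pi_\mu \circ j = j_\mu$ by construction, this delivers the surjective $*$-homomorphism witnessing $(\mathcal{C}_\mu, j_\mu) \preceq (C^*(j(\A)), j)$.

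The crux is the supremum step. Given any upper bound $(\mathcal{D}, k)$ for $\mathscr{S}$ with surjective $*$-homomorphisms $\rho_\lambda : \mathcal{D} \to \mathcal{C}_\lambda$ satisfying $\rho_\lambda \circ k = j_\lambda$, the idea is to package them into the diagonal map $\rho := (\rho_\lambda)_\lambda : \mathcal{D} \to \prod_\lambda \mathcal{C}_\lambda$. This is well-defined and a $*$-homomorphism because each $\rho_\lambda$ is contractive, so $\sup_\lambda \|\rho_\lambda(d)\| \leq \|d\| < \infty$. Then $\rho \circ k = j$ forces the image $\rho(\mathcal{D}) = \rho(C^*(k(\A)))$ to equal $C^*(\rho(k(\A))) = C^*(j(\A))$, so $\rho$ restricts to a surjective $*$-homomorphism $\mathcal{D} \to C^*(j(\A))$ with the required intertwining, giving $(C^*(j(\A)), j) \preceq (\mathcal{D}, k)$.

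The main obstacle I anticipate is purely bookkeeping: deciding on the correct interpretation of $\bigoplus$ so that the construction behaves well for an arbitrary index set, and verifying that each step is compatible with that convention. Once the $\ell^\infty$-product is chosen as the ambient algebra, both the upper-bound step and the supremum step reduce to routine universal-property chases.
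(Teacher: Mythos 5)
Your proof is correct and follows essentially the same route as the paper's: check $j$ is completely isometric, obtain the upper-bound property from coordinate projections, and package the maps $\rho_\lambda$ from any other upper bound into a diagonal $*$-homomorphism onto $C^*(j(\A))$. Your explicit choice of the $\ell^\infty$-product as the ambient algebra is a welcome clarification of a convention the paper leaves implicit, but it does not change the argument.
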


\begin{proof}
It is clear that $j$ is a completely isometric algebra homomorphism, and thus, $(C^*(j(\A)), j)$ is a C*-cover for $\A$. One can check that $(C^*(j(\A)), j)$ is an upper bound for $\mathscr{S}$ using projections onto the appropriate summand. Indeed, suppose $(\mathcal{D},i)$ is another upper bound for $\mathscr{S}$. Then for each $\lambda \in \Lambda$, there exists a $*$-homomorphism $\pi_\lambda$ of $\mathcal{D}$ onto $\mathcal{C}_\lambda$ such that $\pi_\lambda \circ i = j_\lambda$. Define $\pi:\mathcal{D} \to \bigoplus_\lambda \mathcal{C}_\lambda$ by $\pi = \bigoplus_\lambda \pi_\lambda$. Then $\pi$ is a $*$-homomorphism of $\mathcal{D}$ onto $C^*(j(\A))$ such that $$\pi \circ i = \bigoplus_\lambda (\pi_\lambda \circ i) = \bigoplus_\lambda j_\lambda = j.$$ Thus, $(C^*(j(\A)), j)$ is below $(\mathcal{D},i)$.
\end{proof}

Proposition \ref{thm:cjoin} gives us the definition of a concrete join operation on $\text{C*-Lat($\A$)}$ that makes it a complete join semi-lattice. It is left to the reader to verify that the operation is well-defined on equivalence classes of C*-covers.

\begin{definition}\label{def:cjoin}
Let $\mathscr{S} =\set{(\mathcal{C}_\lambda, j_\lambda): \lambda \in \Lambda}$ be a collection of C*-covers for $\A$. Consider the C*-subalgebra $C^*\left(\bigoplus_{\lambda} j_\lambda(\A)\right)$ contained in $\bigoplus_{\lambda \in \Lambda} C_\lambda$. We say the \textit{join} of $\mathscr{S}$ is the equivalence class of $(C^*\left(\bigoplus_{\lambda} j_\lambda(\A)\right), \bigoplus_\lambda j_\lambda)$ in $\text{C*-Lat($\A$)}$, which we denote by $\bigvee_{\lambda\in \Lambda} \Cc_\lambda$.
\end{definition}

We define the meet operation for $\text{C*-Lat($\A$)}$ using the boundary ideal structure of $\Cmax{\A}$.

\begin{proposition}\label{thm:cmeet}
Let $\mathscr{S} =\set{(\mathcal{C}_\lambda, j_\lambda): \lambda \in \Lambda}$ be a collection of C*-covers for $\A$. Then there exists a C*-cover for $\A$ that is an infimum for $\mathscr{S}$.
\begin{proof}
Let $\mathcal{J}$ be the Shilov boundary ideal for $\A$ in $\Cmax{\A}$. For each $\lambda \in \Lambda$, there exists a $*$-epimorphism $q_\lambda:\Cmax{\A}\to \mathcal{C}_\lambda$ such that $q_\lambda\circ \imax=j_\lambda$ and $\ker q_\lambda\subseteq \mathcal{J}$ is a boundary ideal for $\A$ in $\Cmax{\A}$.

Define $J$ to be the norm-closure of the two-sided ideal generated by $\bigcup_\lambda \ker q_\lambda$ in $\Cmax{\A}$, i.e. $J= \overline{\sum_\lambda \ker q_\lambda}^{\norm{\cdot}}$. Then $J$ is the smallest closed two-sided in $\Cmax{\A}$ containing $\bigcup_\lambda \ker q_\lambda$, which implies $J \subseteq \mathcal{J}$. Hence, 
$J$ is a boundary ideal for $\A$ in $\Cmax{\A}$, and it follows that $(\Cmax{\A}/J, q_J\circ \imax)$ is a C*-cover for $\A$, where $q_J$ is the canonical quotient map of $\Cmax{\A}$ onto $\Cmax{\A}/J$. 

Clearly, $(\Cmax{\A}/J, q_J\circ \imax)$ is a lower bound for $\mathscr{S}$. We claim that $(\Cmax{\A}/J, q_J\circ \imax)$ is the infimum of $\mathscr{S}$. Suppose $(\mathcal{D},i)$ is a C*-cover for $\A$ such that $(\mathcal{D},i)\preceq (\mathcal{C}_\lambda, j_\lambda)$ for all $\lambda \in \Lambda$. Then there exists a $*$-epimorphism $\pi$ of $\Cmax{\A}$ onto $\mathcal{D}$ such that $\pi \circ \imax = j$. Hence, $\ker \pi$ is a boundary ideal for $\A$ in $\Cmax{\A}$ such that $\Cmax{\A}/\ker\pi \isom \mathcal{D}$. Note that $\bigcup_{\lambda} \ker q_\lambda$ is contained in $\ker \pi$ since $(\mathcal{D}, i)\preceq (\mathcal{C}_\lambda, j_\lambda)$ for all $\lambda \in \Lambda$. Indeed, for each $\lambda \in \Lambda$, $(\mathcal{D}, i) \preceq (\mathcal{C}_\lambda, j_\lambda)$ implies there exists a $*$-epimorphism $Q_\lambda:\mathcal{C}_\lambda \to \mathcal{D}$ such that the following diagram commutes.

\begin{center}
\begin{tikzcd}
 &  \Cmax{\A} \arrow{d}{q_\lambda} \\
 \A \arrow{dr}[swap]{i} \arrow{ur}{\imax} \arrow{r}[swap]{j_\lambda} & \mathcal{C}_\lambda\arrow[dashed]{d}{Q_\lambda}\\
 &  \mathcal{D}
\end{tikzcd}
\end{center}
Thus, for each $\lambda \in \Lambda$, we have $$(Q_\lambda\circ q_\lambda) \circ \imax = Q_\lambda\circ (q_\lambda \circ \imax) = Q_\lambda \circ j_\lambda = i = \pi \circ \imax,$$ which implies $\pi = Q_\lambda\circ q_\lambda$ since $\Cmax\A=C^*(\imax(\A))$. It follows that $\ker q_\lambda \subseteq \ker \pi$ for all $\lambda \in \Lambda$. 

Since $J$ is the smallest ideal containing $\bigcup_\lambda \ker q_\lambda$, we must have $J \subseteq \ker \pi$. Thus, $\ker \pi/J$ is an ideal in $\Cmax{\A}/J$ such that $(\Cmax{\A}/J)/(\ker\pi/J) \isom \mathcal{D}$, i.e. $\mathcal{D}$ is a quotient of $\Cmax{\A}/J$. Hence, there exists a $*$-homomorphism $\tilde{\pi}$ of $\Cmax{\A}/J$ onto $\mathcal{D}$ such that $\tilde{\pi}\circ q_J = \pi$, which yields $$\tilde{\pi}\circ(q_J\circ \imax) = (\tilde{\pi}\circ q_J)\circ \imax = \pi\circ \imax = i.$$ Therefore, we have $(\mathcal{D}, i)\preceq (\Cmax{\A}/J, q_J\circ \imax)$.
\end{proof}
\end{proposition}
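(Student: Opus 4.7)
The strategy is to exploit the 1-1 correspondence of Proposition \ref{prop:1-1-corr-for-boundary-ideals} between equivalence classes of C*-covers for $\A$ and boundary ideals in $\Cmax{\A}$, together with the observation that this correspondence is order-reversing. Indeed, a containment $I_1 \subseteq I_2$ of boundary ideals produces a quotient map $\Cmax{\A}/I_1 \twoheadrightarrow \Cmax{\A}/I_2$ that intertwines the canonical embeddings of $\A$, so $(\Cmax{\A}/I_1,\cdot) \succeq (\Cmax{\A}/I_2,\cdot)$. Hence constructing an infimum in C*-Lat$(\A)$ amounts to constructing a \emph{supremum} of the corresponding boundary ideals inside $\Cmax{\A}$.

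Concretely, I would let $q_\lambda : \Cmax{\A} \to \mathcal{C}_\lambda$ denote the canonical $*$-epimorphism provided by the universal property, set $I_\lambda := \ker q_\lambda$, and take $J$ to be the closed two-sided ideal of $\Cmax{\A}$ generated by $\bigcup_\lambda I_\lambda$. Since each $I_\lambda$ is a boundary ideal, each lies inside the Shilov boundary ideal $\mathcal{J}$; because $\mathcal{J}$ is already a closed two-sided ideal containing every $I_\lambda$, we get $J \subseteq \mathcal{J}$, so $J$ is itself a boundary ideal. Thus $(\Cmax{\A}/J,\, q_J \circ \imax)$ is a bona fide C*-cover for $\A$, and this is my candidate for the infimum.

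To confirm that this is a lower bound for $\mathscr{S}$, note that each inclusion $I_\lambda \subseteq J$ yields, via the universal property of quotients, a $*$-epimorphism $\mathcal{C}_\lambda \isom \Cmax{\A}/I_\lambda \twoheadrightarrow \Cmax{\A}/J$ intertwining $j_\lambda$ with $q_J \circ \imax$. For the infimum property, let $(\mathcal{D},i)$ be any other lower bound, with corresponding $*$-epimorphism $\pi : \Cmax{\A} \to \mathcal{D}$ from the universal property of $\Cmax{\A}$; each relation $(\mathcal{D},i) \preceq (\mathcal{C}_\lambda,j_\lambda)$ supplies a $*$-homomorphism $Q_\lambda : \mathcal{C}_\lambda \to \mathcal{D}$, and I would verify $\pi = Q_\lambda \circ q_\lambda$ by checking agreement on the generating set $\imax(\A)$. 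This forces $I_\lambda \subseteq \ker \pi$ for every $\lambda$, hence $J \subseteq \ker \pi$, so $\pi$ descends to the desired $*$-homomorphism $\tilde\pi : \Cmax{\A}/J \to \mathcal{D}$ with $\tilde\pi \circ (q_J \circ \imax) = i$. The only mildly delicate step is this last uniqueness argument (identifying $\pi$ with $Q_\lambda \circ q_\lambda$); everything else reduces to elementary ideal-theoretic manipulation and diagram chasing.
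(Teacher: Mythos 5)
Your proposal is correct and follows essentially the same route as the paper: take $J$ to be the closed ideal generated by $\bigcup_\lambda \ker q_\lambda$, observe $J \subseteq \mathcal{J}$ so that $J$ is a boundary ideal, and verify the infimum property by showing $\pi = Q_\lambda \circ q_\lambda$ on the generating set $\imax(\A)$ so that $J \subseteq \ker\pi$ and $\pi$ descends to the quotient. The only cosmetic difference is that you frame the construction explicitly through the order-reversing correspondence with boundary ideals, which the paper leaves implicit here and records separately after Proposition \ref{prop:1-1-corr-for-boundary-ideals}.
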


Proposition \ref{thm:cmeet} yields the definition of a meet operation on $\text{C*-Lat($\A$)}$ that makes it a complete meet semi-lattice.  It is left to the reader to verify that the operation is well-defined on equivalence classes of C*-covers.

\begin{definition}\label{def:cmeet}
Let $\mathscr{S} =\set{(\mathcal{C}_\lambda, j_\lambda): \lambda \in \Lambda}$ be a collection of C*-covers for $\A$. For each $\lambda\in \Lambda$, let $q_\lambda$ be the quotient map of $\Cmax\A$ onto $\Cc_\lambda$ such that $q_\lambda\circ\imax = j_\lambda$. Define $J$ to be the norm closure of the two-sided ideal ${\sum_\lambda \ker q_\lambda}$ in $\Cmax{\A}$ and let $q_J$ be the quotient map of $\Cmax\A$ onto $\Cmax\A/J$. The \textit{meet} of $\mathscr{S}$ is the equivalence class for the C*-cover $(\Cmax{\A}/J, q_J\circ\imax)$, which we denote by $\bigwedge_{\lambda\in \Lambda} \Cc_\lambda$.
\end{definition}

One can define the join of C*-covers in terms of boundary ideals in the maximal C*-cover as we did with the meet by defining $J$ in Definition \ref{def:cmeet} to be $\bigcap_\lambda \ker q_\lambda$. The equivalence of this definition to Definition \ref{def:cjoin} is left to the reader.

\begin{corollary}
$\text{C*-Lat($\A$)}\equiv (\text{C*-Lat($\A$)}, \preceq)$ forms a complete lattice with respect to the join and meet operations defined in Definitions \ref{def:cjoin} and \ref{def:cmeet}.
\end{corollary}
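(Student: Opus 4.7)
The corollary is essentially a bookkeeping statement that packages Propositions \ref{thm:cjoin} and \ref{thm:cmeet}. My plan is to verify the three ingredients of a complete lattice on the poset $(\text{C*-Lat($\A$)}, \preceq)$: that $\preceq$ is a partial order (already observed in Section \ref{sec:preliminaries}), that every subset admits a supremum, and that every subset admits an infimum. The top and bottom elements, $\Cmax\A$ and $\Ce\A$, are also already on record via their universal properties.

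For suprema I would appeal directly to Proposition \ref{thm:cjoin}: given any collection $\mathscr{S}=\{(\Cc_\lambda,j_\lambda):\lambda\in\Lambda\}$, the C*-cover $(C^*(j(\A)),j)$ with $j=\bigoplus_\lambda j_\lambda$ is shown there to be a least upper bound. The only residual point is well-definedness of the join on equivalence classes, which I would dispatch by showing that a choice of $*$-isomorphisms $\pi_\lambda:\Cc_\lambda\to\Cc'_\lambda$ intertwining the embeddings of $\A$ assembles into $\bigoplus_\lambda\pi_\lambda$, a $*$-isomorphism of the ambient direct sums that restricts to a $*$-isomorphism of the generated C*-subalgebras and intertwines $\bigoplus_\lambda j_\lambda$ with $\bigoplus_\lambda j'_\lambda$. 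Thus $\bigvee_{\lambda}\Cc_\lambda$ is a well-defined equivalence class in $\text{C*-Lat($\A$)}$.

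For infima I would appeal to Proposition \ref{thm:cmeet}, which constructs $\bigwedge_{\lambda\in\Lambda}\Cc_\lambda$ as $\Cmax\A/J$ with $J=\overline{\sum_\lambda \ker q_\lambda}^{\norm{\cdot}}$ and verifies it is a greatest lower bound. Well-definedness on equivalence classes is automatic here: for each $\lambda$, the $*$-epimorphism $q_\lambda:\Cmax\A\to\Cc_\lambda$ satisfying $q_\lambda\circ\imax=j_\lambda$ is uniquely determined by the universal property of $\Cmax\A$, and if $(\Cc_\lambda,j_\lambda)\simeq(\Cc'_\lambda,j'_\lambda)$ via $\rho_\lambda$, then $q'_\lambda=\rho_\lambda\circ q_\lambda$, so $\ker q_\lambda=\ker q'_\lambda$ as ideals of $\Cmax\A$. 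Hence $J$ depends only on the equivalence classes of the $\Cc_\lambda$, and so does the quotient $(\Cmax\A/J, q_J\circ\imax)$.

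I do not anticipate any real obstacle here: the two preceding propositions do the hard work, and this corollary simply concatenates them into the complete-lattice assertion. I would close by remarking that the resulting complete lattice has top $\Cmax\A$ (corresponding to the trivial boundary ideal $\{0\}\subseteq\Cmax\A$) and bottom $\Ce\A$ (corresponding to the Shilov boundary ideal $\mathcal{J}\subseteq\Cmax\A$), consistent with the anti-lattice correspondence from Proposition \ref{prop:1-1-corr-for-boundary-ideals}.
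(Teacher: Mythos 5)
Your proposal is correct and follows exactly the route the paper intends: the corollary is stated without proof as the concatenation of Propositions \ref{thm:cjoin} and \ref{thm:cmeet}, which is precisely how you argue. Your well-definedness checks on equivalence classes (the direct sum of intertwining $*$-isomorphisms for the join, and uniqueness of $q_\lambda$ from $\Cmax\A = C^*(\imax(\A))$ forcing $\ker q_\lambda = \ker q'_\lambda$ for the meet) are exactly the verifications the paper explicitly leaves to the reader, so nothing is missing.
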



\subsection{$\al$-Admissibility and the Complete Lattice of C*-Covers}


Given a dynamical system $(\A, G, \al)$, E. Katsoulis and C. Ramsey prove in \cite{Kat-1} that $\Ce\A$ and $\Cmax\A$ are always $\al$-admissible by their respective universal properties. So determining the $\al$-admissibility of a C*-cover $(\mathcal{C},j)$ for $\A$ can be thought of as an automorphism lifting problem from the quotient $\Ce{\A}$ to $\mathcal{C}$ or an automorphism factoring problem from $\Cmax{\A}$ to $\mathcal{C}$. We approach $\al$-admissibility from the latter perspective to show that the equivalence classes of $\al$-admissible C*-covers form a complete sublattice of $\text{C*-Lat($\A$)}$.

\begin{definition}
Let $(\A, G, \al)$ be a dynamical system and let $(\mathcal{C}, j)$ be an $\al$-admissible C*-cover for $\A$. A closed two-sided ideal $J$ in $\mathcal{C}$ is called an $\al$-\textit{invariant ideal} in $\mathcal{C}$ if $J$ is invariant under each extended automorphism $\tilde{\al}_s \in \Aut(\Cc)$, i.e. $\tilde{\al}_s(J)\subseteq J$ for all $s \in G$.  A closed two-sided ideal $J$ in $\mathcal{C}$ is called an $\A$-\textit{invariant ideal} in $\mathcal{C}$ if $J$ is invariant under each $*$-automorphism of $\mathcal{C}$ that leaves $j(\A)$ invariant in $\mathcal{C}$. 
\end{definition}

\begin{example}\label{ex:K-invar}
The Toeplitz algebra $\mathcal{T}=C^*(T_z)$ together with the restriction of the symbol map $T:C(\T) \to \mathcal{T}$ defined by $T_f := P_{H^2(\T)}M_f$ is a C*-cover for the disc algebra $A(\D)=\{f\in C(\overline{\D}): f\vert_\D\text{ is analytic}\}$. Indeed, $\mathcal{T}=C^*(T(A(\D))$ and the algebra homomorphism $T\vert_{A(\D)}$ is completely isometric by the matricial version of von Neumann's inequality. 

Suppose $(A(\D), G, \al)$ is a dynamical system such that $(\mathcal{T}, T\vert_{A(\D)})$ is $\al$-admissible. Since $*$-automorphisms of the Toeplitz algebra are inner in $\B(H^2(\T))$, the compact operators $\mathbb{K}$ in $\mathcal{T}$ are invariant under the $*$-automorphisms of $\mathcal{T}$. Hence, $\mathbb{K}$ is $\al$-invariant in $\mathcal{T}$, and it follows that $\mathbb{K}$ is $A(\mathbb{D})$-invariant in $\mathcal{T}$.
\end{example}

We observe that the $\A$-invariant ideals of $\Cmax{\A}$ characterize the C*-covers for $\A$ that are $\al$-admissible for any dynamical system $(\A, G, \al)$. We call these C*-covers \textit{always admissible} for $\A$.

\begin{theorem}\label{thm:admisscorrespondence} Let $(\A, G, \al)$ be a dynamical system and let $(\mathcal{D}, i)$ be an $\al$-admissible C*-cover for $\A$. If $(\mathcal{C},j)$ is any C*-cover for $\A$ such that there exists a $*$-homomorphism $\pi$ of $\mathcal{D}$ onto $\Cc$ satisfying $\pi\circ i = j$, i.e. $(\mathcal{C},j) \preceq (\mathcal{D},i)$, then the following are equivalent:
\begin{enumerate}
\item  $(\mathcal{C}, j)$ is $\al$-admissible.
\item $\ker \pi$ is an $\al$-invariant ideal in $\mathcal{D}$. 
\end{enumerate}
\begin{proof}
Since $(\mathcal{D}, i)$ is $\al$-admissible, there exists a strongly continuous group representation $\beta:G \to \Aut(\mathcal{D})$ such that $\beta_s\circ i = i\circ \al_s $ for all $s \in G$. 

If $(\mathcal{C}, j)$ is $\al$-admissible, there exists another strongly continuous group representation $\tilde{\al}:G \to \Aut(\mathcal{C})$ such that $\tilde{\al}_s\circ j = j\circ \al_s$ for all $s \in G$. Since $\pi\circ i =j$, $\pi$ intertwines the actions $\beta$ and $\tilde{\al}$ of $G$ on $i(\A)$ and $j(\A)$, respectively. That is, for all $s \in G$, we have $(\pi\circ \beta_s)\circ i = \tilde{\al}_s \circ j.$ 

Since the image of an approximate unit for $\A$ under $i$ is an approximate unit for $\mathcal{D}$, any monomial in $i(\A)$ and $i(\A)^*$ can be written as the norm-limit of products of elements of the form $i(a)i(b)^*$. Thus, if $m$ is any monomial in $i(\A)$ and $i(\A)^*$ contained in $\ker\pi$, the intertwining of $\beta$ and $\tilde{\al}$ yields $\beta_s(m)=0$ for all $s\in G$. 

Extending linearly to polynomials in $i(\A)$ and $i(\A)^*$ contained in $\ker\pi$ then appealing to continuity, it follows that each $\beta_s$ leaves $\ker\pi$ invariant. Hence, $\ker\pi$ is an $\al$-invariant ideal.  

Conversely, if $\ker\pi$ is $\al$-invariant, then each $\beta_s$ factors through the quotient to a $*$-automorphism $\tilde{\al}_s$ that makes the below diagram commute.

\begin{center}
\begin{tikzcd}
\mathcal{D}\arrow{r}{\beta_s}\arrow[swap]{d}{\pi} & \mathcal{D}\arrow{d}{\pi}\\
\mathcal{C}\isom \mathcal{D}/\ker\pi \arrow[dashed]{r}{\tilde{\al}_s}& \mathcal{D}/\ker\pi \isom\mathcal{C}
\end{tikzcd}
\end{center}

The map $\tilde{\al}:G \to \Aut(\mathcal{C})$ given by $s \mapsto \tilde{\al}_s$ defines a strongly continuous group representation, and thus $(\mathcal{C}, G, \tilde{\al})$ is a C*-dynamical system.

An easy computation that appeals to the intertwining of $j$ and $i$ via $\pi$ and the $\al$-admissibility of $(\mathcal{D},i)$ yields that $\tilde{\al}_s\circ j = j\circ \al_s$ for all $s \in G$. Thus, $(\mathcal{C},j)$ is $\al$-admissible.
\end{proof}
\end{theorem}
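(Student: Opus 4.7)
The plan is to exploit the fact that $\mathcal{D} = C^*(i(\A))$, so any two $*$-homomorphisms on $\mathcal{D}$ that agree on the generating set $i(\A)$ must coincide. Let $\beta : G \to \Aut(\mathcal{D})$ denote the extension of $\al$ guaranteed by the $\al$-admissibility of $(\mathcal{D}, i)$. Both directions will reduce to recognizing an intertwining relation $\pi \circ \beta_s = \tilde{\al}_s \circ \pi$, which is equivalent to $\ker \pi$ being $\al$-invariant.

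For the forward direction, I assume $(\mathcal{C}, j)$ is $\al$-admissible with extension $\tilde{\al} : G \to \Aut(\mathcal{C})$ and compare the two $*$-homomorphisms $\pi \circ \beta_s$ and $\tilde{\al}_s \circ \pi$ from $\mathcal{D}$ to $\mathcal{C}$. For $a \in \A$, one reads off
\[ (\pi \circ \beta_s)(i(a)) = \pi(i(\al_s(a))) = j(\al_s(a)) = \tilde{\al}_s(j(a)) = (\tilde{\al}_s \circ \pi)(i(a)), \]
so the two maps agree on $i(\A)$. Since both are $*$-preserving they also agree on $i(\A)^*$, hence on the $*$-algebra generated by $i(\A)$, and by norm continuity on all of $\mathcal{D}$. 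The resulting equality $\pi \circ \beta_s = \tilde{\al}_s \circ \pi$ immediately yields $\beta_s(\ker \pi) \subseteq \ker \pi$ for every $s \in G$.

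For the reverse direction, I assume $\ker \pi$ is $\al$-invariant. Applying the invariance to both $\beta_s$ and $\beta_{s^{-1}}$ shows that each $\beta_s$ descends to a well-defined $*$-automorphism $\tilde{\al}_s$ of $\mathcal{C} \isom \mathcal{D}/\ker \pi$ characterized by $\tilde{\al}_s \circ \pi = \pi \circ \beta_s$. I would then verify three points: (a) $s \mapsto \tilde{\al}_s$ is a group homomorphism into $\Aut(\mathcal{C})$, inherited from $\beta$; (b) strong continuity of $\tilde{\al}$ transfers from that of $\beta$ because $\pi$ is a contractive $*$-homomorphism; and (c) the intertwining with $j$ follows from the computation $\tilde{\al}_s \circ j = \tilde{\al}_s \circ \pi \circ i = \pi \circ \beta_s \circ i = \pi \circ i \circ \al_s = j \circ \al_s$, which is precisely $\al$-admissibility of $(\mathcal{C}, j)$.

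The only genuinely delicate point is the extension-by-generators step in the forward direction: $i(\A)$ is typically not closed under adjoints inside $\mathcal{D}$, so agreement of $\pi \circ \beta_s$ and $\tilde{\al}_s \circ \pi$ on $i(\A)$ alone does not obviously propagate. The standard workaround, which the proof below phrases via products $i(a)i(b)^*$ and approximate units, is in essence the observation that both sides are $*$-homomorphisms out of $C^*(i(\A))$ and so are determined by their values on a generating set; once that is settled, the rest is straightforward diagram chasing.
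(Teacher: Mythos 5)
Your proposal is correct and follows essentially the same route as the paper: both directions hinge on extending the intertwining relation $\pi\circ\beta_s=\tilde{\al}_s\circ\pi$ from the generating set $i(\A)$ to all of $\mathcal{D}$, and then descending $\beta$ to the quotient for the converse. Your packaging of the forward direction (two $*$-homomorphisms agreeing on a generating set agree on $C^*(i(\A))$, so $\ker\pi$-invariance is immediate) is a slightly cleaner rendering of the paper's monomial/approximate-unit argument, but it is the same idea.
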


\begin{example}
Suppose $(A(\D),G,\al)$ is a dynamical system such that $(\mathcal{T}, T\vert_{A(\D)})$ is $\al$-admissible. In Example \ref{ex:K-invar}, we saw that the compact operators $\mathbb{K}$ on $H^2(\T)$ is an $\al$-invariant ideal for $A(\D)$ in $\mathcal{T}$. Thus, Theorem \ref{thm:admisscorrespondence} says $\mathcal{T}/\mathbb{K} \isom C(\T)$ (with the appropriate complete isometry) is $\al$-admissible, which comes as no surprise since $(C(\T), (\cdot)\vert_\T)$ is the C*-envelope for $A(\D)$.
\end{example}

Theorem \ref{thm:admisscorrespondence} implies that every $\al$-admissible C*-cover $(\mathcal{C},j)$ for an operator algebra $\A$ must contain a nontrivial $\A$-invariant ideal when $(\Cc,j)\not\simeq \Ce{\A}$, namely the Shilov boundary ideal for $\A$ in $\Cc$. The $\A$-invariance property of the Shilov boundary ideal can be found in the literature (see Lemma 3.11 in \cite{Kat-1}, for example). However, its formulation in terms of the $\al$-admissibility/$\al$-invariance correspondence given in Theorem \ref{thm:admisscorrespondence} is new, and it yields our characterization in Corollary \ref{cor:Cmaxalwaysadmiss}.

\begin{corollary}\label{cor:Cmaxalwaysadmiss}
Let $(\A, G, \al)$ be a dynamical system and suppose $(\mathcal{C},j)$ is a C*-cover for $\A$. If $\pi:\Cmax{\A} \to \mathcal{C}$ is the quotient map of $\Cmax{\A}$ onto $\mathcal{C}$ such that $\pi \circ \imax = j$, then the following are equivalent:
\begin{enumerate}
\item  $(\mathcal{C}, j)$ is $\al$-admissible.
\item $\ker \pi$ is an $\al$-invariant ideal in $\Cmax{\A}$.
\end{enumerate}
Moreover, $(\mathcal{C}, j)$ is always admissible for $\A$ if and only if $\ker\pi$ is $\A$-invariant in $\Cmax{\A}$.
\end{corollary}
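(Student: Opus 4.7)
The plan is to deduce this corollary directly from Theorem \ref{thm:admisscorrespondence} by taking $(\mathcal{D}, i) = \Cmax{\A}$ and then unpacking what ``always admissible'' forces about $\ker\pi$.

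First, for the stated equivalence (i) $\Leftrightarrow$ (ii), I would observe that $\Cmax{\A}$ is $\al$-admissible for \emph{every} dynamical system $(\A, G, \al)$ by the universal property recalled from \cite{Kat-1}. The universal property of $\Cmax{\A}$ also provides the surjection $\pi:\Cmax{\A}\to \mathcal{C}$ with $\pi\circ \imax = j$, so $(\mathcal{C},j)\preceq \Cmax{\A}$ is automatic. Plugging $(\mathcal{D},i)=\Cmax\A$ into Theorem \ref{thm:admisscorrespondence} then gives the equivalence immediately.

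For the ``moreover'' clause, I would argue both directions by leveraging the uniqueness of the extended action (\cite[Lemma 2.2]{Kat-3}). For the forward direction, assume $\ker\pi$ is $\A$-invariant in $\Cmax{\A}$ and let $(\A, G, \al)$ be any dynamical system. The extended action $\beta:G\to \Aut(\Cmax\A)$ satisfies $\beta_s\circ \imax = \imax \circ \al_s$, so each $\beta_s$ is a $*$-automorphism of $\Cmax{\A}$ that leaves $\imax(\A)$ invariant; by $\A$-invariance, each $\beta_s(\ker\pi)\subseteq \ker\pi$, so $\ker\pi$ is $\al$-invariant and (i) $\Leftrightarrow$ (ii) then gives $\al$-admissibility of $(\mathcal{C},j)$.

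For the converse, suppose $(\mathcal{C},j)$ is always admissible and let $\gamma\in \Aut(\Cmax{\A})$ be any $*$-automorphism with $\gamma(\imax(\A))\subseteq \imax(\A)$ (equivalently, with equality, since $\gamma^{-1}$ satisfies the same). Set $\al := \imax^{-1}\circ (\gamma\vert_{\imax(\A)})\circ \imax$, which is a completely isometric automorphism of $\A$, and consider the dynamical system $(\A, \Z, \al)$ obtained by iterating $\al$; strong continuity is automatic since $\Z$ is discrete. By always admissibility, $(\mathcal{C}, j)$ is $\al$-admissible; the extended action of $\Z$ on $\Cmax{\A}$ must be generated by $\gamma$ itself, by uniqueness of the dynamical extension. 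The first part of the corollary now forces $\gamma(\ker\pi)\subseteq \ker\pi$, and as $\gamma$ was arbitrary this gives $\A$-invariance of $\ker\pi$.

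The only real subtlety I anticipate is in the converse of the ``moreover'' clause: one must recognize that every $*$-automorphism of $\Cmax{\A}$ preserving $\imax(\A)$ genuinely arises as the $\Cmax{\A}$-extension of some dynamical system on $\A$, so that always admissibility can be invoked. Once this observation is in place together with the uniqueness-of-extension result, everything else is a direct translation through Theorem \ref{thm:admisscorrespondence}.
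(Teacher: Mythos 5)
Your proposal is correct and follows exactly the route the paper intends: the equivalence is the specialization $(\mathcal{D},i)=\Cmax{\A}$ of Theorem \ref{thm:admisscorrespondence}, and the ``moreover'' clause is obtained by running every dynamical system through the $\A$-invariance hypothesis in one direction and, in the other, realizing an arbitrary $*$-automorphism of $\Cmax{\A}$ preserving $\imax(\A)$ as the (unique, by \cite[Lemma 2.2]{Kat-3}) extension of an induced $\Z$-action on $\A$. The only caveat is your parenthetical claim that $\gamma(\imax(\A))\subseteq\imax(\A)$ automatically upgrades to equality ``since $\gamma^{-1}$ satisfies the same'' --- that implication is not automatic, but the natural reading of $\A$-invariance (automorphisms restricting to automorphisms of $\imax(\A)$) is what the statement requires, so your argument stands.
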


The following corollary is a restatement of Corollary \ref{cor:Cmaxalwaysadmiss} in terms of boundary ideals, explicitly. It is a useful perspective to keep in mind.%

\begin{corollary}\label{cor:invar-admiss}
Suppose $J$ is a boundary ideal for $\A$ in $\Cmax{\A}$ and let $\pi: \Cmax{\A} \to \Cmax{\A}/J$ be the natural quotient map. The following are equivalent: 
\begin{enumerate}
\item $J$ is an $\al$-invariant ideal.
\item $(\Cmax{\A}/J, \pi\circ i_{\text{max}})$ is an $\al$-admissible C*-cover for $\A$.
\end{enumerate}
\begin{proof}
Suppose $J$ is $\al$-invariant. Since $J$ is a boundary ideal for $\A$ in $\Cmax{\A}$, $(\Cmax{\A}/J, \pi\circ \imax)$ is a C*-cover for $\A$. Since $J = \ker \pi$ is an $\al$-invariant ideal in $\Cmax{\A}$, Corollary \ref{cor:Cmaxalwaysadmiss} says $(\Cmax{\A}/J, \pi\circ \imax)$ is $\al$-admissible.

The converse follows directly from Theorem \ref{thm:admisscorrespondence}.
\end{proof}
\end{corollary}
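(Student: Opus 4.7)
The plan is to observe that this corollary is essentially a repackaging of Corollary \ref{cor:Cmaxalwaysadmiss} (equivalently, a direct specialization of Theorem \ref{thm:admisscorrespondence}) once we identify the data: take $(\mathcal{D}, i) = (\Cmax{\A}, \imax)$, which is $\al$-admissible by \cite{Kat-1}, and take $\mathcal{C} = \Cmax{\A}/J$ with structural map $\pi \circ \imax$, so that the kernel of the quotient $\pi$ is exactly $J$.

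First I would verify the underlying setup, namely that $(\Cmax{\A}/J, \pi \circ \imax)$ is genuinely a C*-cover for $\A$. This is built into the hypothesis that $J$ is a boundary ideal: by definition $\pi$ is completely isometric on $\imax(\A)$, and since $\imax$ is a complete isometry, the composite $\pi \circ \imax$ is a completely isometric homomorphism. Moreover $\Cmax{\A}/J = C^*(\pi(\imax(\A))) = C^*((\pi \circ \imax)(\A))$, so the C*-cover requirement is met. This makes the setup of Theorem \ref{thm:admisscorrespondence} (or Corollary \ref{cor:Cmaxalwaysadmiss}) applicable.

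For (i) $\Rightarrow$ (ii), I would invoke Corollary \ref{cor:Cmaxalwaysadmiss} directly: since $\ker \pi = J$ is $\al$-invariant in the $\al$-admissible cover $\Cmax{\A}$, that corollary yields $\al$-admissibility of $(\Cmax{\A}/J, \pi \circ \imax)$. For the converse (ii) $\Rightarrow$ (i), I would again apply Theorem \ref{thm:admisscorrespondence} with $\mathcal{D} = \Cmax{\A}$ and the $*$-epimorphism $\pi$; the $\al$-admissibility of $(\Cmax{\A}/J, \pi \circ \imax)$ immediately gives that $\ker \pi = J$ is an $\al$-invariant ideal in $\Cmax{\A}$.

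There is no real obstacle here beyond the bookkeeping of recognizing that $J = \ker \pi$ and that $\pi\circ\imax$ is automatically a complete isometry by the defining property of a boundary ideal; the entire argument is a one-line reduction to Corollary \ref{cor:Cmaxalwaysadmiss}. The value of the statement is presentational: it repackages the $\al$-admissibility/$\al$-invariance correspondence in the language of boundary ideals of $\Cmax{\A}$, which parallels the correspondence of Proposition \ref{prop:1-1-corr-for-boundary-ideals} and is the form most useful for the lattice-theoretic discussion that follows.
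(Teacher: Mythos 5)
Your proposal matches the paper's proof essentially verbatim: both verify that the boundary-ideal hypothesis makes $(\Cmax{\A}/J, \pi\circ\imax)$ a C*-cover, then deduce (i)$\Rightarrow$(ii) from Corollary \ref{cor:Cmaxalwaysadmiss} and (ii)$\Rightarrow$(i) directly from Theorem \ref{thm:admisscorrespondence}. The argument is correct and takes the same route as the paper.
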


Theorem \ref{thm:admisscorrespondence} and its corollaries give a strategy for producing examples of always admissible C*-covers. We use Corollary \ref{cor:invar-admiss} to study the collection of C*-covers for $\mathcal{A}$ that are always admissible for $\A$. We will see that the collection of all such C*-covers forms a complete sublattice of $\text{C*-Lat($\A$)}$. Note that if a representative for an equivalence class in $\text{C*-Lat($\A$)}$ is $\al$-admissible, then every representative for that equivalence class is $\al$-admissible by the definition of C*-cover equivalency.

\begin{theorem}\label{thm:admiss-lattice} Let $(\A, G, \al)$ be a dynamical system. Denote the set of all equivalence classes of $\al$-admissible C*-covers for $\A$ by $\text{C*-Lat($\A,G,\al$)}.$ Then $\text{C*-Lat($\A,G,\al$)} \equiv (\text{C*-Lat($\A,G,\al$)}, \preceq)$ forms a complete sublattice of $\text{C*-Lat($\A$)}$.
\begin{proof}
E. Katsoulis and C. Ramsey showed in Lemma 3.4 of \cite{Kat-1} that $\Cmax{\A}$ and $\Ce{\A}$ are always admissible via their universal properties. So $\text{C*-Lat($\A,G,\al$)}$ is non-empty and contains a maximal and minimal element with respect to the partial order on $\text{C*-Lat($\A$)}$. It remains to show that the meet and join of a collection of $\al$-admissible C*-covers remains $\al$-admissible.

Let $\mathscr{S} = \set{(\mathcal{C}_\lambda, j_\lambda): \lambda \in \Lambda}$ be a collection of $\al$-admissible C*-covers for $\A$. For each $\lambda \in I$, there exists a strongly continuous group representation $\beta^{(\lambda)}:G \to \Aut(\mathcal{C}_\lambda)$ such that $\beta^{(\lambda)}_s\circ j_\lambda = j_\lambda\circ \al_s$ for all $s \in G$. Define $\beta^\vee:G \to \Aut(\bigoplus_\lambda \Cc_\lambda)$ by $$\beta^\vee_s = \bigoplus_\lambda \beta^{(\lambda)}_s.$$ We claim that $\beta^\vee$ can be viewed as a representation of $G$ into $\Aut(\bigvee_\lambda C_\lambda)$ with the identification $\bigvee_\lambda C_\lambda\equiv \left(C^*\left(\left(\bigoplus_\lambda j_\lambda\right)(\A)\right), \bigoplus_\lambda j_\lambda\right)$. To see this, first note that for all $s \in G$, we have 
\begin{align*}
\beta_s^\vee \circ \lp \bigoplus_\lambda j_\lambda\rp = \bigoplus_\lambda \left(\beta_s^{(\lambda)}\circ j_\lambda\right) = \bigoplus_\lambda \left( j_\lambda\circ \al_s\right) = \left(\bigoplus_\lambda j_\lambda\right)\circ \al_s.
\end{align*}
Hence, $\bigoplus_\lambda j_\lambda$ intertwines $\beta^\vee$ and $\al$, and it follows that $\beta_s^\vee\left(\left(\bigoplus_{\lambda}j_\lambda\right)(\A)\right) \subseteq \left(\bigoplus_{\lambda}j_\lambda\right)(\A)$ for all $s \in G$. 
Hence, as each $\beta_s^\vee$ is a $*$-homomorphism with $\beta_s^\vee\circ \left(\bigoplus_\lambda j_\lambda\right) = \left(\bigoplus_\lambda j_\lambda\right)\circ \al_s$, for all $s \in G$ we have $$\beta_s^\vee\lp \bigvee_{\lambda} C_\lambda\rp =C^*\left(\beta_s^\vee \circ \bigoplus_\lambda j_\lambda (\A)\right)=C^*\left(\bigoplus_{\lambda} j_\lambda\circ \al_s(\A)\right) = \bigvee_{\lambda} C_\lambda.$$  Hence, we have $\beta^\vee_s\vert_{\bigvee_\lambda \mathcal{C}_\lambda}\in \Aut\left(\bigvee_\lambda \mathcal{C}_\lambda\right)$ for all $s \in G$, which proves our claim. Therefore, $\beta^\vee:G \to \Aut(\bigvee_\lambda \Cc_\lambda)$ given by $s \mapsto \beta_s^\vee\vert_{\bigvee_\lambda \Cc_\lambda}$ is a well-defined group representation. An $\ep/3$ argument shows that $\beta^\vee$ inherits strong continuity from $\al$. Thus, $\bigvee_\lambda \mathcal{C}_\lambda$ is $\al$-admissible and is a supremum for $\mathscr{S}$.

As $\Cmax{\A}$ is $\al$-admissible, there exists a strongly continuous group representation $\tilde{\al}:G \to \Aut(\Cmax{\A})$ such that $\tilde{\al}_s\circ \imax = j\circ \al_s$ for all $s \in G$. Moreover, for each $\lambda \in \Lambda$, there is a quotient map $q_\lambda$ of $\Cmax{\A}$ onto $\mathcal{C}_\lambda$ such that $q_\lambda\circ\imax=j_\lambda$. Recall that $\bigwedge_\lambda \mathcal{C}_\lambda$ is the equivalence class of $(\Cmax{\A}/J,q_J\circ \imax)$ in $\text{C*-Lat($\A$)}$, where $J$ is the norm-closure of $\sum_\lambda \ker q_\lambda$ in $\Cmax{\A}$ and $q_J$ is the quotient map of $\Cmax\A$ onto $\Cmax\A/J$. For each $\lambda \in \Lambda$, the $\al$-admissibility of $(\mathcal{C}_\lambda, j_\lambda)$ implies that $\ker q_\lambda$ is $\tilde{\al}$-invariant in $\Cmax{\A}$ by Corollary \ref{cor:Cmaxalwaysadmiss}. Thus, $J$ is $\tilde{\al}$-invariant in $\Cmax{\A}$ since $\tilde{\al}_s$ is norm-continuous for all $s \in G$. Therefore, the characterization of $\al$-admissibility in Corollary \ref{cor:invar-admiss} yields $(\Cmax{\A}/J,q_J\circ \imax)$ is $\al$-admissible. It is left to the read to verify that every representative in $\bigwedge_\lambda \Cc_\lambda$ is $\al$-admissible and $\bigwedge_\lambda \Cc_\lambda$ must be the infimum for $\mathscr{S}$.
\end{proof}
\end{theorem}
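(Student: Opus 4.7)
The plan is to verify three things: that $\text{C*-Lat}(\A,G,\al)$ is non-empty, that joins of admissible C*-covers are admissible, and that meets of admissible C*-covers are admissible. Non-emptiness (and the existence of top and bottom elements) is immediate from the result of Katsoulis--Ramsey cited in the excerpt: $\Cmax{\A}$ and $\Ce{\A}$ are always $\al$-admissible. For the other two, I expect to lean heavily on the characterization in Corollary \ref{cor:invar-admiss}/\ref{cor:Cmaxalwaysadmiss}, which translates $\al$-admissibility into $\al$-invariance of a boundary ideal in $\Cmax{\A}$.

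For the \emph{join}, given an admissible family $\{(\Cc_\lambda, j_\lambda)\}_{\lambda\in\Lambda}$ with extensions $\beta^{(\lambda)}: G\to \Aut(\Cc_\lambda)$, the natural candidate is the pointwise diagonal action $\beta^\vee_s := \bigoplus_\lambda \beta^{(\lambda)}_s$ on $\bigoplus_\lambda \Cc_\lambda$. First I would verify that $\bigoplus_\lambda j_\lambda$ intertwines $\al$ with $\beta^\vee$, which is an immediate term-by-term check. From this, $\beta^\vee_s$ carries $(\bigoplus_\lambda j_\lambda)(\A)$ into itself, and since $\beta^\vee_s$ is a $*$-homomorphism, it preserves the generated C*-algebra $C^*(\bigoplus_\lambda j_\lambda(\A)) = \bigvee_\lambda \Cc_\lambda$. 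Hence $\beta^\vee_s$ restricts to a $*$-automorphism of $\bigvee_\lambda \Cc_\lambda$, and strong continuity transfers from the $\beta^{(\lambda)}$ via a standard $\ep/3$ argument (using that the extension is uniquely determined by $\al$ on the generating subalgebra). This gives the required extension on the join.

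For the \emph{meet}, I would follow the boundary-ideal picture directly. Since $\Cmax{\A}$ is $\al$-admissible there is a strongly continuous $\tilde{\al}:G\to\Aut(\Cmax{\A})$ lifting $\al$, and each $(\Cc_\lambda, j_\lambda)$ corresponds via Proposition \ref{prop:1-1-corr-for-boundary-ideals} to a boundary ideal $\ker q_\lambda \subseteq \Cmax{\A}$. By Corollary \ref{cor:Cmaxalwaysadmiss}, $\al$-admissibility of $(\Cc_\lambda, j_\lambda)$ is equivalent to $\tilde{\al}$-invariance of $\ker q_\lambda$. Now the meet $\bigwedge_\lambda \Cc_\lambda$ corresponds, by Definition \ref{def:cmeet}, to $J = \overline{\sum_\lambda \ker q_\lambda}$. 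A sum of $\tilde{\al}$-invariant ideals is $\tilde{\al}$-invariant (each $\tilde{\al}_s$ is linear), and norm-continuity of each $\tilde{\al}_s$ shows the closure $J$ is also $\tilde{\al}$-invariant. Applying Corollary \ref{cor:invar-admiss} returns the $\al$-admissibility of $(\Cmax{\A}/J, q_J\circ \imax)$, which is precisely a representative of $\bigwedge_\lambda \Cc_\lambda$.

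The main subtlety I expect is the join argument: one has to confirm that $\beta^\vee_s$ actually \emph{preserves} the subalgebra $\bigvee_\lambda \Cc_\lambda \subsetneq \bigoplus_\lambda \Cc_\lambda$ rather than just the generating set, and that strong continuity passes to the infinite direct sum (where uniform bounds on finitely many coordinates, controlled by an approximate unit argument, are needed). Everything else — well-definedness on equivalence classes and the verification that these are genuinely supremum and infimum inside the sublattice — is routine diagram-chasing using the universal property of $\Cmax{\A}$ and the uniqueness of the dynamical extension from \cite[Lemma 2.2]{Kat-3}, and can be deferred to the reader.
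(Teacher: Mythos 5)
Your proposal is correct and follows essentially the same route as the paper: non-emptiness via the always-admissible extremal covers, the join handled by the diagonal action $\beta^\vee_s=\bigoplus_\lambda\beta^{(\lambda)}_s$ restricted to $C^*\bigl(\bigoplus_\lambda j_\lambda(\A)\bigr)$ with an $\ep/3$ argument for strong continuity, and the meet handled by translating admissibility into $\tilde{\al}$-invariance of the boundary ideals $\ker q_\lambda$ in $\Cmax{\A}$ and noting that $\overline{\sum_\lambda\ker q_\lambda}$ inherits invariance by linearity and norm-continuity. The subtleties you flag (preserving the generated subalgebra rather than just the generating set, and deferring the equivalence-class and extremality checks to routine diagram chasing) are exactly the points the paper addresses or leaves to the reader.
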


\begin{remark} It should be noted that an argument similar to the meet case in Theorem \ref{thm:admiss-lattice} can be applied to the join case by working with the representative $(\Cmax{\A}/J,q_J)$, where $J=\bigcap_\lambda \ker q_\lambda$ in $\Cmax{\A}$, in $\bigvee_\lambda C_\lambda$. The proof is shorter than the argument provided; however, the given proof is more constructive. 
\end{remark}


\section{Inner Dynamical Systems}\label{sec:inner-dynamical}


In this section, we will assume that $\A$ is unital and restrict our view to dynamical systems where the action of the group is implemented by inner automorphisms of a C*-algebra. We define an \textit{inner dynamical system} of an operator algebra relative to a C*-cover and show that admissibility of a C*-cover is always guaranteed when the action is implemented by inner automorphisms of the original operator algebra. 


\begin{definition}\label{def:inner-in}
Let ($\A, G, \al$) be a dynamical system and let $(\Cc, j)$ be an $\al$-admissible C*-cover for $\A$. Then $(\A, G, \al)$ is \textit{inner in $(\Cc, j)$} if there exists a norm-continuous unitary group representation $U: G \to \mathcal{U}(\Cc)$ such that the extended action $\tilde{\al}:G \to \Aut(\Cc)$ is implemented by $U$, i.e. $\tilde{\al}_s = \text{ad}(U_s)$ for all $s \in G$. 
\end{definition}

In the C*-algebra context, it is standard in the literature to require the unitary group representation in Definition \ref{def:inner-in} to be strictly continuous in the multiplier algebra for $\Cc$. Since our algebras are unital, the multiplier algebra for $\Cc$ is $\Cc$ itself and strict convergence is equivalent to norm convergence. 

\begin{example}\label{ex:T2-inner} Let $T_2$ be the upper $2\times 2$ triangular matrices in $M_2$, and suppose $(T_2, \Z, \al)$ is a dynamical system. Since $(M_2, \text{incl})$ is $\al$-admissible, each $\al_n$ extends to a $*$-automorphism of $M_2$. Hence, for all $n \in \Z$, there exist unitary matrices $U_n \in M_2$ such that $\al_n = \text{ad}(U_n)$. Since each $\al_n$ must leave $T_2$ invariant, an easy computation shows that each $U_n$ must be a diagonal unitary matrix. Since $\Z$ is cyclic and discrete, it follows that $U:\Z \to \mathcal{U}(M_2)$ is a norm-continuous group representation as $\al$ is a group representation. Therefore, $(T_2, \Z, \al)$ is inner in $(M_2, \text{incl})$. 
\end{example}

When a dynamical system is inner in an $\al$-admissible C*-cover, an inner C*-dynamical system arises in the classical sense, and it is well-known that the group action of an inner C*-dynamical system looks trivial in the crossed product, in which case the full (reduced) crossed product is isomorphic to the maximal (minimal) tensor product of the C*-cover with the full (reduced) group C*-algebra of $G$. See \cite{Williams-1} for reference.

Even in non-self-adjoint dynamical systems, an action implemented by unitaries in the operator algebra looks trivial in the crossed product, as we will see in Theorem \ref{thm:inner-in-itself}. We define what it means for a dynamical system to be inner with respect to the original operator algebra.

\begin{definition}
Let ($\A, G, \al$) be a dynamical system. $(\A, G, \al)$ is \textit{inner in itself} if there exists a norm continuous unitary group representation $U: G \to \mathcal{U}(\A)$ such that ${\al}_s = \text{ad}(U_s)$ for all $s \in G$.
\end{definition}

Every inner C*-dynamical system is inner in itself. When a dynamical system $(\A, G, \al)$ is inner in itself, there are enough unitaries in the unital C*-subalgebra $\A\cap \A^*$ in $\A$ to implement the action of $G$ on $\A$, which is quite restrictive. The C*-subalgebra $\A\cap \A^*$ is called the \textit{diagonal of $\A$} and is well-defined and independent of C*-cover for $\A$ since any completely isometric representation of $\A$ is $*$-isometric when restricted to $\A\cap \A^*$. See 2.1.2 in \cite{Blecher-1}. In the following theorem, we show that $(\A, G, \al)$ being inner in itself is so restrictive that every C*-cover for $\A$ is $\al$-admissible and every relative crossed product coincides with its trivial counterpart.

\begin{theorem}\label{thm:inner-in-itself}
Suppose $(\A, G, \al)$ is inner in itself and let $(\Cc, j)$ be a C*-cover for $\A$. Then $(\Cc, j)$ is $\al$-admissible and the following operator algebras are completely isometrically isomorphic $$\A \semi_{(\Cc, j), \al} G \isom \A\semi_{(\Cc, j), \iota} G,$$ where $\iota$ is the trivial action.
\begin{proof}
Let $U:G \to \mathcal{U}(\A)$ be the norm-continuous unitary group representation such that $\al_s = \text{ad}(U_s)$ for all $s \in G$. Note that the restriction $j\vert_{\A\cap \A^*}: \A\cap \A^*\to \Cc$ is a completely isometric representation between C*-algebras. Hence, $j\vert_{\A\cap \A^*}$ must be a $*$-isometric homomorphism on $\A\cap \A^*$. Since $U(G)$ is contained in $\A\cap \A^*$, it follows that $j(U_s^*)=j(U_s)^*$ for all $s \in G$ and each $j(U_s)$ is unitary in $\Cc$.

Define $\tilde{U}:G \to \mathcal{U}(\Cc)$ by $\tilde{U}_s = j(U_s)$. Then $\tilde{U}$ is a norm continuous unitary group representation into $\Cc$, and it follows that $\beta:G\to \Aut(\Cc)$ given by $\beta_s = \text{ad}(\tilde{U}_s)$ is a strongly continuous group representation. Moreover, for all $s \in G$ and $a \in \A$, we have $$\beta_s(j(a)) = \tilde{U}_sj(a)\tilde{U}_s^* = j(U_s)j(a)j(U_s)^* = j(U_s a U_s^*) = j(\al_s(a)).$$ Thus, $(\Cc, j)$ is $\al$-admissible.

Since $(\Cc, G, \beta)$ is an inner C*-dynamical system via the unitary group representation $\tilde{U}:G \to \mathcal{U}(\Cc)$, we have $\Cc\semi_\beta G$ is isomorphic to the ``trivial'' C*-crossed product $\Cc\semi_\iota G$ via the $*$-isomorphism $\varphi:\Cc\semi_\beta G \to \Cc\semi_\iota G$ defined by $$\varphi(f)(s) = f(s)\tilde{U}_s \quad \text{for all $f \in C_c(G, \Cc)_\beta$.}$$ Thus, the restriction $\phi:=\varphi\vert_{\A\semi_{(\Cc, j), \al} G}$ is a completely isometric homomorphism. It remains to see that $\phi(\A\semi_{(\Cc, j), \al} G) = \A\semi_{(\Cc, j), \iota}G \subseteq \Cc\semi_\iota G$. 

Clearly, $\phi(\A\semi_{(\Cc, j), \al} G)$ is contained in $\A\semi_{(\Cc, j), \iota}G$ since $\tilde{U}$ is continuous and each $\tilde{U}_s$ lies in $j(\A)$. Let $f \in C_c(G, j(\A))_\iota$ be given. Since $\tilde{U}$ is a continuous representation, $g(s)=f(s)\tilde{U}_{s^{-1}} = f(s)j(U_{s^{-1}})$ is a compactly supported continuous function from $G$ to $j(\A)$, i.e. $g \in C_c(G, j(\A))_\al$. Hence, for each $s \in G$, we have $\phi(g)(s) = g(s)U_s = f(s)U_{s^{-1}}U_s = f(s)$. Therefore, $\phi$ maps a dense subset of $\A\semi_{(\Cc, j), \al} G$ onto a dense subset of $\A \semi_{(\Cc, j), \iota} G$, and thus, $\phi$ maps onto $\A\semi_{(\Cc, j), \iota} G$ by continuity.
\end{proof}
\end{theorem}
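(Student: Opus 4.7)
The plan is to prove admissibility first and then exploit the fact that the extended action will automatically be inner in the C*-cover, reducing the problem to a standard result for inner C*-dynamical systems.

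For admissibility, the starting observation is that $U(G)$ sits inside the diagonal $\A \cap \A^*$. Since any completely isometric representation of an operator algebra restricts to a $*$-homomorphism on the diagonal, $j$ sends each $U_s$ to a unitary $\tilde{U}_s := j(U_s) \in \Cc$, and $s \mapsto \tilde{U}_s$ is a norm-continuous unitary group representation inheriting these properties from $U$. Setting $\beta_s := \mathrm{ad}(\tilde{U}_s)$ produces a strongly continuous action $\beta: G \to \Aut(\Cc)$ (norm-continuous, in fact), and the one-line calculation $\beta_s(j(a)) = j(U_s)j(a)j(U_s)^* = j(\al_s(a))$ shows $\beta$ lifts $\al$ through $j$. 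Hence $(\Cc, j)$ is $\al$-admissible.

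For the isomorphism, I would invoke the standard fact that an inner C*-action yields a crossed product isomorphic to the trivial one: the map $\varphi: C_c(G, \Cc) \to C_c(G, \Cc)$ defined by $\varphi(f)(s) := f(s)\tilde{U}_s$ extends to a $*$-isomorphism $\Cc \semi_\beta G \to \Cc \semi_\iota G$ (one checks it intertwines the twisted convolution on the left with untwisted convolution on the right, and preserves the involution). Restricting $\varphi$ to $\A \semi_{(\Cc,j),\al} G$ gives a completely isometric homomorphism; what remains is to identify its range with $\A \semi_{(\Cc,j),\iota} G$. Since $\tilde{U}_s \in j(\A)$, if $f \in C_c(G, j(\A))_\al$ then $\varphi(f)(s) = f(s)\tilde{U}_s \in j(\A)$, so $\varphi$ sends the dense subalgebra into $C_c(G, j(\A))_\iota$. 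Conversely, for $f \in C_c(G, j(\A))_\iota$ the function $g(s) := f(s)\tilde{U}_{s^{-1}} = f(s)j(U_{s^{-1}})$ lies in $C_c(G, j(\A))_\al$ and satisfies $\varphi(g) = f$, so $\varphi$ is bijective between these dense subsets. Passing to closures and using that $\varphi$ is completely isometric yields the desired identification.

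The main technical point to be careful about is the interaction between the norm continuity of $\tilde{U}$ and the continuity of the $C_c(G, j(\A))$ sections under pointwise multiplication by $\tilde{U}_s$: this is precisely what guarantees $s \mapsto f(s)\tilde{U}_{s^{\pm 1}}$ is still a continuous, compactly supported $j(\A)$-valued function, so that the preimage $g$ above lives in the correct dense subalgebra. The rest is bookkeeping: verifying that $\varphi$ restricts to the relative crossed products (as opposed to just the ambient C*-crossed products), and that the isomorphism statement is truly a complete isometry rather than merely an isomorphism, both of which follow from $\varphi$ being a $*$-isomorphism of the ambient C*-algebras.
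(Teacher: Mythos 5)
Your proposal is correct and follows essentially the same route as the paper's proof: push the unitaries through $j$ via the diagonal $\A\cap\A^*$ to get admissibility, then restrict the standard inner-action isomorphism $\varphi(f)(s)=f(s)\tilde{U}_s$ and verify it carries the dense subalgebra $C_c(G,j(\A))_\al$ bijectively onto $C_c(G,j(\A))_\iota$ using $g(s)=f(s)\tilde{U}_{s^{-1}}$. No substantive differences to report.
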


An analogous result from C*-crossed product theory follows from Theorem \ref{thm:inner-in-itself}. S. Harris and S. Kim outline the proof in \cite[Example 2.4]{Harris-Kim} in the context of trivial actions. We include the proof for convenience. 

\begin{corollary}
If $(\A, G, \al)$ is inner in itself, then $$\A \semi_\al^r G \isom \A\otimes_{\text{min}} C^*_r(G) \text{ and } \A \semi_\al G \isom \A\otimes_{\text{max}} C^*(G).$$
\begin{proof}
Theorem \ref{thm:inner-in-itself} says $\A \semi_\al^r G \isom \A\semi_{\Ce{\A},\iota}G$ and $\A \semi_\al G \isom \A\semi_{\Cmax{\A},\iota}G$, where $\iota$ is the trivial action. One can check that the maps
\begin{align*}
    \A \semi_\al^r G \ni a\lambda_s \mapsto a\otimes \lambda_s \in \A \otimes_{\text{min}} C^*_r(G)\\
    \A \semi_\al G \ni a\delta_s \mapsto a\otimes \delta_s \in \A \otimes_{\text{max}} C^*(G)
\end{align*}
extend to completely isometric isomorphisms.
\end{proof}
\end{corollary}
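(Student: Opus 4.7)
The plan is to apply Theorem \ref{thm:inner-in-itself} directly. Since $(\A,G,\al)$ is inner in itself, that theorem already yields complete isometric isomorphisms
$$\A \semi_\al^r G \isom \A \semi_{(\Ce{\A},\iota)}^r G \quad\text{and}\quad \A \semi_\al G \isom \A \semi_{(\Cmax{\A},\iota)} G,$$
where $\iota$ denotes the trivial action. It therefore suffices to identify each of these trivial-action relative crossed products with the claimed tensor product.

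At the C*-cover level, I would invoke the classical fact (see \cite{Williams-1}) that for any C*-algebra $\mathcal{B}$ and locally compact group $G$, the canonical maps $b\delta_s \mapsto b \otimes \delta_s$ and $b\lambda_s \mapsto b \otimes \lambda_s$ furnish $*$-isomorphisms $\mathcal{B} \semi_\iota G \isom \mathcal{B} \otimes_{\max} C^*(G)$ and $\mathcal{B} \semi_\iota^r G \isom \mathcal{B} \otimes_{\min} C^*_r(G)$. Applying this with $\mathcal{B} = \Cmax{\A}$ carries $\A \semi_{(\Cmax{\A},\iota)} G$ onto the closure of $\imax(\A) \odot \C[G]$ inside $\Cmax{\A} \otimes_{\max} C^*(G)$; applying it with $\mathcal{B} = \Ce{\A}$ carries $\A \semi_{(\Ce{\A},\iota)}^r G$ onto the closure of $\imin(\A) \odot \C[G]$ inside $\Ce{\A} \otimes_{\min} C^*_r(G)$.

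The final step is to recognize these closures as $\A \otimes_{\max} C^*(G)$ and $\A \otimes_{\min} C^*_r(G)$, respectively. Injectivity of $\otimes_{\min}$ applied to the complete isometric inclusion $\A \hookrightarrow \Ce{\A}$ immediately handles the reduced case. For the full case, I would use that $\Cmax{\A} \otimes_{\max} C^*(G)$ satisfies the universal property of $C^*_{\max}$ for the operator algebra $\A \otimes_{\max} C^*(G)$, so that the algebraic tensor product sits inside $\Cmax{\A} \otimes_{\max} C^*(G)$ with exactly the maximal operator-algebra tensor norm. This last identification --- matching the non-self-adjoint $\otimes_{\max}$ with the restriction of the C*-algebraic $\otimes_{\max}$ --- is the main subtle point and the chief obstacle; everything else is a direct consequence of Theorem \ref{thm:inner-in-itself} and standard C*-crossed product machinery.
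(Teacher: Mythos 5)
Your proposal is correct and follows essentially the same route as the paper: reduce to the trivial action via Theorem \ref{thm:inner-in-itself}, then identify the trivial-action relative crossed products over $\Ce{\A}$ and $\Cmax{\A}$ with the min and max tensor products. The paper compresses the second step into ``one can check'' (deferring to \cite{Harris-Kim}), whereas you supply the justification --- injectivity of $\otimes_{\min}$ for the reduced case and the universal property of $\Cmax{\A}$ matching the operator-algebra $\otimes_{\max}$ norm for the full case --- which is exactly the intended argument.
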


We give an example of a dynamical system that is not inner in itself but is inner in its C*-envelope. We show that the corresponding crossed product is non-trivial.

\begin{example}
Let $A_4$ be the four-cycle algebra in $M_4$. Theorem \ref{thm:inner-in-itself} yields $A_4 \semi_{\iota} \Z/2\Z \isom A_4 \oplus A_4$ as operator algebras. Consider the dynamical system $(A_4, \Z/2\Z, \text{ad}(u \oplus u))$, where $u=\begin{bmatrix*}[c]0 & 1\\ 1 & 0\end{bmatrix*}\in M_2$. Then $(A_4, \Z/2\Z, \text{ad}(u\oplus u))$ is not inner in itself but is inner in $\Ce{A_4} = M_4$.

Set $v = E_{11}-E_{22}\in M_2$ and define $\varphi:M_4 \semi_{\text{ad}(u\oplus u)} \Z/2\Z \to M_4 \oplus M_4$ by $$\varphi(a\delta_0 + b\delta_1) = \begin{bmatrix*}[c]a + bu & 0\\ 0 & a-bu\end{bmatrix*}= I_2\otimes a+ v\otimes bu.$$ Since $u= u^*$, it follows that $\varphi$ is a $*$-isomorphism and its restriction $\varphi\vert_{A_4 \semi_{\text{ad}(u\oplus u)} \Z/2\Z}$ is a completely isometric representation of the non-self-adjoint crossed product $A_4 \semi_{\text{ad}(u\oplus u)} \Z/2\Z$. Hence, we can identify $A_4 \semi_{\text{ad}(u\oplus u)} \Z/2\Z$ with its image $\A:=\varphi(A_4 \semi_{\text{ad}(u\oplus u)} \Z/2\Z)$ in $M_4 \oplus M_4$:  
$$\A = \set{\begin{bmatrix*}[c]A & B & & \\ & C & &\\ & & vAv & D\\ & & & vCv \end{bmatrix*} : A, B, C, D \in M_2}.$$ 

One can show that the diagonal of $\A$ is $\A \cap \A^* \isom M_2 \oplus M_2$ while the diagonal of $A_4 \oplus A_4$ is isomorphic to $\C^8$. Therefore, $A_4 \semi_{\text{ad}(u\oplus u)} \Z/2\Z$ is not isomorphic to $A_4 \oplus A_4 \isom A_4 \semi_{\iota} \Z/2\Z$.
\end{example}

Even if the action $\al:G \curvearrowright \A$ is implemented by inner automorphisms in some C*-cover $(\Cc, j)$, the existence of a unitary group representation is not guaranteed. We call dynamical systems whose action is implemented by inner automorphisms in a C*-cover \textit{locally inner}.

\begin{definition}
Let ($\A, G, \al$) be a dynamical system and suppose $(\Cc, j)$ is a C*-cover for $\A$.
\begin{enumerate}
\item $(\A, G, \al)$ is \textit{locally inner in $(\Cc, j)$} if for each $s \in G$ there exists a unitary $U_s \in \mathcal{\Cc}$ such that $\al_s = j^{-1}\circ \text{ad}(U_s)\circ j.$
\item $(\A, G, \al)$ is \textit{locally inner in itself} if for each $s \in G$ there exists a unitary $U_s \in \A\cap \A^*$ such that $\al_s = \text{ad}(U_s)$.
\end{enumerate}
\end{definition}

\begin{example}\label{ex:T2-locally-inner}
Suppose $(T_2, G, \al)$ is a dynamical system. One can show that each $\al_s=\text{ad}(U_s)$ for some diagonal unitary $U_s\in T_2$, i.e., $(T_2, G, \al)$ is locally inner in itself.
\end{example}

\begin{remark}In this context, being locally inner says that the map $s \mapsto U_s$ is a projective representation. See Appendix D.3 in \cite{Williams-1} as a reference for projective representations.
\end{remark}

Locally inner dynamical systems respect the lattice of C*-covers nicely. In particular, the existence of inner automorphisms in a C*-cover that implement the action is sufficient for that C*-cover (and any C*-cover below it) to be admissible.

\begin{theorem}\label{thm:locally-inner-admiss}
Let $(\A, G, \al)$ be a dynamical system that is locally inner in some C*-cover $(\Cc, j)$ for $\A$. Then the following hold:
\begin{enumerate}
\item $(\Cc, j)$ is $\al$-admissible.
\item If $(\mathcal{D}, i)$ is any C*-cover such that $(\mathcal{D}, i)\preceq (\Cc, j)$, then $(\mathcal{D},i)$ is $\al$-admissible. In particular, $(\A, G, \al)$ is locally inner in $(\mathcal{D},i)$.
\end{enumerate}
\begin{proof}
Since $(\A, G, \al)$ is locally inner in $(\Cc, j)$, there exist unitaries $\set{U_s}_{s\in G} \subseteq \Cc$ such that $\al_s = j^{-1}\circ \text{ad}(U_s)\circ j$ for all $s \in G$. Define $\beta:G \to \Aut(\Cc)$ by $\beta_s = \text{ad}(U_s)$. Then $\beta$ is a strongly continuous group representation since it extends $\al$ and $\Cc$ is generated by $j(\A)$. By definition, we have $\beta_s \circ j = j \circ \al_s$ for all $s \in G$ . Thus, $(\Cc, j)$ is $\al$-admissible.

Suppose $(\mathcal{D}, i)$ is a C*-cover for $\A$ such that $(\mathcal{D}, i) \preceq (\Cc, j)$. Then there exists a $*$-epimorphism $\pi$ of $\Cc$ onto $\mathcal{D}$ such that $\pi \circ j = i$. Hence, we have $\pi(U_s)$ is unitary in $\mathcal{D}$ for all $s \in G$ and the map $\gamma:G \to \Aut(\mathcal{D})$ defined by $\gamma_s = \text{ad}(\pi(U_s))$ is a strongly continuous group representation. Let $s \in G$ and $a \in \A$ be given. Since $\pi \circ j = i$, we have $$\gamma_s(i(a)) = \pi(U_s)i(a)\pi(U_s)^* =\pi(U_s)\pi(j(a))\pi(U_s^*) = \pi(U_sj(a)U_s^*).$$ As $\beta_s = \text{ad}(U_s)$ and $\beta_s \circ j = j\circ \al_s$, it immediately follows that $$\gamma_s(i(a))= \pi(\beta_s(j(a))) = \pi(j(\al_s(a)))= i(\al_s(a)).$$ Therefore, $(\mathcal{D}, i)$ is $\al$-admissible.
\end{proof}
\end{theorem}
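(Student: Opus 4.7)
The plan is to construct the extended action on $\Cc$ directly from the given implementing unitaries, and then push it down to any C*-cover below $(\Cc,j)$.

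For (1), I would define $\beta: G \to \Aut(\Cc)$ by $\beta_s := \text{ad}(U_s)$. Since $U_s$ is unitary in $\Cc$, each $\beta_s$ is automatically a $*$-automorphism, and the intertwining relation $\beta_s \circ j = j \circ \al_s$ is immediate from the hypothesis $\al_s = j^{-1}\circ \text{ad}(U_s)\circ j$. The key subtlety is that \emph{a priori} the family $\{U_s\}_{s\in G}$ is only a set-theoretic assignment (in fact a projective representation, per the earlier remark), not a unitary group homomorphism; consequently the group law $\beta_{st} = \beta_s \circ \beta_t$ does not follow formally from unitary multiplication. To verify it, I would note that both $\beta_{st}$ and $\beta_s \circ \beta_t$ are $*$-automorphisms of $\Cc$ which, when restricted to $j(\A)$, both equal $j \circ \al_{st} \circ j^{-1}$ by virtue of $\al$ being a group representation. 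Because $\Cc = C^*(j(\A))$ and any two $*$-automorphisms agreeing on a set that generates $\Cc$ as a C*-algebra must agree everywhere, $\beta_{st} = \beta_s \circ \beta_t$ globally.

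For strong continuity of $\beta$, I would use a standard $\ep/3$ argument. On the generating elements $j(a)$ and $j(a)^*$, the map $s \mapsto \beta_s(j(a)) = j(\al_s(a))$ is continuous by strong continuity of $\al$, and this propagates to the (norm-dense) $*$-algebra generated by $j(\A)$ via the product rule. Since each $\beta_s$ is isometric and the $*$-polynomials in $j(\A)$ are dense in $\Cc$, an approximation argument extends continuity to all of $\Cc$. This establishes that $\beta$ is a strongly continuous action making $(\Cc,j)$ into an $\al$-admissible C*-cover.

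For (2), given a C*-cover $(\mathcal{D},i)\preceq (\Cc,j)$ with $*$-epimorphism $\pi:\Cc\to\mathcal{D}$ satisfying $\pi\circ j = i$, push the unitaries downward by setting $V_s := \pi(U_s)$. Then $V_s \in \mathcal{D}$ is unitary and for any $a \in \A$,
\[
\text{ad}(V_s)(i(a)) = \pi(U_s)\pi(j(a))\pi(U_s^*) = \pi(j(\al_s(a))) = i(\al_s(a)),
\]
so $(\A, G, \al)$ is locally inner in $(\mathcal{D},i)$ via $\{V_s\}_{s\in G}$. Applying part (1) to $(\mathcal{D},i)$ then yields $\al$-admissibility, which is both conclusions of (2) at once. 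I expect the main obstacle to be the strong continuity of $\beta$: since no continuity is assumed on $s \mapsto U_s$, continuity of $\beta$ cannot be read off from the unitaries themselves; the saving observation is that strong continuity of an action by $*$-automorphisms needs only to be checked on a norm-dense $*$-subalgebra, and on $j(\A)$ it is inherited directly from the strong continuity built into the original system $(\A,G,\al)$.
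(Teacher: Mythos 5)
Your proposal is correct and takes essentially the same route as the paper: define $\beta_s = \text{ad}(U_s)$ on $\Cc$ for part (1), and push the unitaries through the quotient $\pi$ to get $\text{ad}(\pi(U_s))$ on $\mathcal{D}$ for part (2). Your explicit verification of the group law $\beta_{st}=\beta_s\circ\beta_t$ (and of strong continuity) via agreement of $*$-automorphisms on the generating set $j(\A)$ just fills in a detail the paper compresses into the single sentence ``$\beta$ is a strongly continuous group representation since it extends $\al$ and $\Cc$ is generated by $j(\A)$.''
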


An immediate corollary of Theorem \ref{thm:locally-inner-admiss} says admissibility of any C*-cover fails to be an obstacle when the original dynamical system is locally inner in itself.

\begin{corollary}
Let $(\A, G, \al)$ be a dynamical system that is locally inner in itself. Then every C*-cover for $\A$ is $\al$-admissible.
\begin{proof}
Let $(\Cc, j)$ be a C*-cover for $\A$. Since $(\A, G, \al)$ is inner in itself, there exist unitaries $\set{U_s}_{s\in G} \subseteq \A\cap\A^*$ such that $\al_s = \text{ad}(U_s)$. As in Theorem \ref{thm:inner-in-itself}, we note that $j$ must be a $*$-homomorphism on $\A \cap \A^*$. Hence, $\set{j(U_s)}_{s\in G}$ are unitaries in $\mathcal{C}$ such that for all $s \in G, a \in \A$ we have $$(j^{-1}\circ \text{ad}(j(U_s)) \circ j)(a) = j^{-1}(j(U_s)j(a)j(U_s)^*) = j^{-1}(j(U_saU_s^*)) =\al_s(a).$$ Therefore, $(\A, G, \al)$ is locally inner in $(\Cc, j)$, and it follows that $(\Cc, j)$ is $\al$-admissible by Theorem \ref{thm:locally-inner-admiss}.
\end{proof}
\end{corollary}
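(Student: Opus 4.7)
The plan is to reduce the statement directly to Theorem \ref{thm:locally-inner-admiss}(i) by showing that if $(\A, G, \al)$ is locally inner in itself, then it is automatically locally inner in every C*-cover. So I would fix an arbitrary C*-cover $(\Cc, j)$ for $\A$ and aim to produce, for each $s \in G$, a unitary in $\Cc$ that implements $\al_s$ after transport through $j$.

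The hypothesis supplies unitaries $U_s \in \A \cap \A^*$ with $\al_s = \text{ad}(U_s)$, so the substance of the argument is showing that the $j(U_s)$ are unitaries in $\Cc$. This is where I would invoke the standard fact (quoted in the excerpt just before Theorem \ref{thm:inner-in-itself}, and referenced to \cite{Blecher-1}) that a completely isometric homomorphism from an operator algebra to a C*-algebra restricts to a $*$-homomorphism on the diagonal $\A \cap \A^*$. Applied to the restriction $j|_{\A \cap \A^*}$, this gives $j(U_s^*) = j(U_s)^*$ and hence the unitarity of $j(U_s)$ in $\Cc$.

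Once the unitaries are in hand, a one-line check using multiplicativity of $j$ yields
\[
j(U_s)\, j(a)\, j(U_s)^* \;=\; j(U_s a U_s^*) \;=\; j(\al_s(a)) \qquad (a \in \A,\ s \in G),
\]
so $\al_s = j^{-1} \circ \text{ad}(j(U_s)) \circ j$. By definition this says $(\A, G, \al)$ is locally inner in $(\Cc, j)$, and Theorem \ref{thm:locally-inner-admiss}(i) then delivers $\al$-admissibility of $(\Cc, j)$. Since $(\Cc, j)$ was arbitrary, the conclusion follows.

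There is no genuine obstacle here: the heavy lifting is already done by Theorem \ref{thm:locally-inner-admiss}. The only subtlety to flag explicitly is that the $U_s$ live in the diagonal, which is precisely the subset of $\A$ where any completely isometric representation is forced to be $*$-preserving; without that observation one could not conclude that the $j(U_s)$ are unitary elements of $\Cc$, and the implementing formula would break down.
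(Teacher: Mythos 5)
Your proposal is correct and follows exactly the paper's argument: push the unitaries $U_s \in \A\cap\A^*$ through $j$ using the fact that a complete isometry is $*$-preserving on the diagonal, verify the implementing identity, and invoke Theorem \ref{thm:locally-inner-admiss}. No differences worth noting.
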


We saw that every dynamical system for $T_2$ was locally inner in itself in Example \ref{ex:T2-locally-inner}. This gives us another immediate corollary to Theorem \ref{thm:locally-inner-admiss}.

\begin{corollary}\label{cor:T2-always-admiss}
Let $(T_2, G, \al)$ be a dynamical system. If $(\Cc,j)$ is a C*-cover for $T_2$, then $(\Cc,j)$ is $\al$-admissible. In other words, every C*-cover for $T_2$ is always admissible for $T_2$.
\end{corollary}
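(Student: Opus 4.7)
The plan is to invoke the immediately preceding corollary, which asserts that locally-inner-in-itself dynamical systems always admit admissible extensions to every C*-cover. So the entire task reduces to verifying the hypothesis of that corollary for an arbitrary action on $T_2$, which is precisely the content of Example \ref{ex:T2-locally-inner}.

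First I would explain why Example \ref{ex:T2-locally-inner} applies uniformly: for any $s \in G$, the automorphism $\al_s$ of $T_2$ extends to a $*$-automorphism of $M_2 \simeq \Ce{T_2}$ by the universal property of the C*-envelope (and $\al$-admissibility of $\Ce{T_2}$). Since every $*$-automorphism of $M_2$ is inner, there is a unitary $V_s \in M_2$ with $\al_s = \text{ad}(V_s)\vert_{T_2}$. The hypothesis $\al_s(T_2) \subseteq T_2$ together with a direct $2\times 2$ computation (as indicated in Example \ref{ex:T2-inner}) forces $V_s$ to be diagonal, hence $V_s$ lies in $T_2 \cap T_2^*$, which is exactly the diagonal subalgebra $D_2 \subseteq T_2$.

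Next I would observe that this gives the defining data of a locally-inner-in-itself system: for each $s \in G$ we have produced a unitary $U_s := V_s \in T_2 \cap T_2^*$ with $\al_s = \text{ad}(U_s)$. No continuity of $s \mapsto U_s$ is required since the notion of \emph{locally} inner only asks for the existence of an implementing unitary separately for each $s$.

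Finally, the conclusion follows by invoking the corollary preceding the statement: once $(T_2, G, \al)$ is locally inner in itself, every C*-cover $(\Cc, j)$ is $\al$-admissible, which is exactly the claim. The only potentially nontrivial step is the diagonality argument, but this is essentially handled by Example \ref{ex:T2-inner} and is a routine matrix computation, so I do not anticipate any real obstacle; the corollary is genuinely immediate.
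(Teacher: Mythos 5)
Your proposal is correct and follows the paper's own route exactly: the paper deduces the corollary from Example \ref{ex:T2-locally-inner} (every dynamical system on $T_2$ is locally inner in itself, via extending each $\al_s$ to an inner $*$-automorphism of $M_2\isom\Ce{T_2}$ and observing the implementing unitary must be diagonal) together with Theorem \ref{thm:locally-inner-admiss} and its corollary on locally-inner-in-itself systems. You have merely spelled out the ``one can show'' step of the example, which is a welcome but not substantively different addition.
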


The universal properties for the C*-envelope and maximal C*-cover yield a final corollary for this section. 

\begin{corollary}
Let $(\A, G, \al)$ be a dynamical system. Then the following hold:
\begin{enumerate}
\item If $(\A, G, \al)$ is locally inner in $\Cmax{\A}$, then $(\A, G, \al)$ is locally inner in every C*-cover for $\A$. Hence, every C*-cover for $\A$ is $\al$-admissible.
\item If $(\A, G, \al)$ is not locally inner in $\Ce{\A}$, then $(\A, G, \al)$ is not locally inner in any C*-cover for $\A$.
\end{enumerate}
\end{corollary}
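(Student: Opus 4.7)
The plan is to leverage Theorem~\ref{thm:locally-inner-admiss}(ii), which states that the locally inner property descends along the partial order $\preceq$, combined with the fact that $\Cmax{\A}$ is maximal and $\Ce{\A}$ is minimal in $\text{C*-Lat($\A$)}$. Both parts should fall out almost immediately; in fact, part (ii) is just the contrapositive of a special case of part (i).

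For (i), I would start by letting $(\Cc, j)$ be an arbitrary C*-cover for $\A$. By the universal property of $\Cmax{\A}$, there exists a $*$-epimorphism $\pi: \Cmax{\A} \to \Cc$ with $\pi \circ \imax = j$, so $(\Cc, j) \preceq (\Cmax{\A}, \imax)$. Since $(\A, G, \al)$ is assumed to be locally inner in $\Cmax{\A}$, Theorem~\ref{thm:locally-inner-admiss}(ii) immediately gives that $(\Cc, j)$ is $\al$-admissible and $(\A, G, \al)$ is locally inner in $(\Cc, j)$. Indeed, inspecting the proof of that theorem, if $\{U_s\}_{s\in G} \subseteq \Cmax{\A}$ are the unitaries implementing $\al$ via $\imax$, then $\{\pi(U_s)\}_{s \in G} \subseteq \Cc$ are unitaries implementing $\al$ via $j$.

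For (ii), I would argue by contrapositive: suppose $(\A, G, \al)$ is locally inner in some C*-cover $(\Cc, j)$. Since $\Ce{\A}$ is the minimal C*-cover, there exists a $*$-epimorphism $q: \Cc \to \Ce{\A}$ with $q \circ j = \imin$, so $(\Ce{\A}, \imin) \preceq (\Cc, j)$. Applying Theorem~\ref{thm:locally-inner-admiss}(ii) once more, $(\A, G, \al)$ is locally inner in $\Ce{\A}$, contradicting the hypothesis.

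Since both parts reduce to a direct invocation of Theorem~\ref{thm:locally-inner-admiss}(ii), there is no substantive obstacle. The only thing to be careful about is citing the right universal properties to produce the quotient maps witnessing $\preceq$ in each direction; after that, the argument is a formal consequence of the lattice structure of C*-covers established in Section~\ref{sec:structure-of-cstar-covers}.
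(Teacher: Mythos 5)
Your proposal is correct and follows exactly the route the paper intends: the paper gives no written proof, stating only that the universal properties of $\Cmax{\A}$ and $\Ce{\A}$ yield the corollary, and your argument supplies precisely those details by producing the quotient maps witnessing $(\Cc,j)\preceq\Cmax{\A}$ and $\Ce{\A}\preceq(\Cc,j)$ and then invoking Theorem~\ref{thm:locally-inner-admiss}(ii). The observation that part (ii) is the contrapositive of a special case of the descent statement is exactly right.
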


\section{A New Example of a Non-Admissible C*-Cover}\label{sec:ex-non-admiss}


E. Katsoulis and C. Ramsey give an example of a C*-cover for $A(\D)$ in \cite{Kat-3} that is non-admissible for an action of $\Z$. We construct a new example of a non-admissible C*-cover in Example \ref{ex:fdnotadmiss}, which is the first finite-dimensional example in the literature.

\begin{example}\label{ex:fdnotadmiss}
Recall that the \textit{four cycle algebra} $A_4$ is the subalgebra of $M_4$ given by $$A_4=\begin{bmatrix}\ast & & \ast & \ast\\ & \ast & \ast & \ast\\ & & \ast & \\ & & & \ast \end{bmatrix}.$$ Let $\set{E_{rc}}_{r,c=1}^4$ be the standard matrix units for $M_4$ and let $I_4$ be the identity matrix in $M_4$. Consider the dynamical system $(A_4, \Z/2\Z, \text{ad}(u \oplus u))$, where $u=\begin{bmatrix*}[c]0 & 1\\ 1 & 0\end{bmatrix*}\in M_2$. Set $p = I_4 + E_{14} + E_{41} \in M_4$. Since $p$ is a positive matrix, the ``Schur product by $p$'' map $S_p: M_4 \to M_4$ is unital and completely positive. It follows that $S_p$ is completely contractive since $S_p$ is unital.

Let $a, b \in A_4$ be given. Consider 
{\footnotesize \begin{align*}
S_p(ab) &= S_p\left( \begin{bmatrix*}[c]a_{11} & 0 & a_{13} & a_{14}\\ 0 & a_{22} & a_{23} & a_{24}\\ 0 & 0 & a_{33} & 0\\ 0 & 0 & 0 & a_{44}\end{bmatrix*}\begin{bmatrix*}[c]b_{11} & 0 & b_{13} & b_{14}\\ 0 & b_{22} & b_{23} & b_{24}\\ 0 & 0 & b_{33} & 0\\ 0 & 0 & 0 & b_{44}\end{bmatrix*}\right)\\
&= S_p\left( \begin{bmatrix*}[c] a_{11}b_{11} & 0 & a_{11}b_{13} + a_{13}b_{33} & a_{11}b_{14}+a_{14}b_{44}\\ 0 & a_{22}b_{22} & a_{22}b_{23}+a_{23}b_{33} & a_{22}b_{24}+a_{24}b_{44}\\ 0 & 0 & a_{33}b_{33} & 0 \\ 0 & 0 & 0 & a_{44}b_{44} \end{bmatrix*}\right)\\
&= \begin{bmatrix*}[c] a_{11}b_{11} & 0 & 0 & a_{11}b_{14}+a_{14}b_{44}\\ 0 & a_{22}b_{22} & 0 & 0\\ 0 & 0 & a_{33}b_{33} & 0 \\ 0 & 0 & 0 & a_{44}b_{44} \end{bmatrix*}=\begin{bmatrix*}[c] a_{11} & 0 & 0 & a_{14}\\ 0 & a_{22} & 0 & 0\\ 0 & 0 & a_{33} & 0 \\ 0 & 0 & 0 & a_{44} \end{bmatrix*}\begin{bmatrix*}[c] b_{11} & 0 & 0 & b_{14}\\ 0 & b_{22} & 0 & 0\\ 0 & 0 & b_{33} & 0 \\ 0 & 0 & 0 & b_{44} \end{bmatrix*}\\
&= S_p(a)S_p(b).
\end{align*}}
Hence, $S_p\vert_{A_4}$ is a unital completely contractive (ucc) representation of $A_4$ into $M_4$.\\

Define $j:A_4 \to M_4 \oplus M_4$ by $$j(a) = \begin{bmatrix*}[c]a & 0 \\ 0 & S_p(a)\end{bmatrix*}.$$ Since $j$ is the direct sum of the ucc homomorphism $S_p$ and the unital completely isometric (ucis) inclusion $A_4 \hookrightarrow M_4$, we have that $j$ is a ucis homomorphism. It follows that $(C^*(j(A_4)), j)$ is a C*-cover for $A_4$, where $C^*(j(A_4))=M_4 \oplus C^*(S_p(A_4))\isom M_4 \oplus M_2 \oplus \C^2$ is a C*-subalgebra of $M_4 \oplus M_4$. We claim $(C^*(j(A_4)), j)$ is not $\text{ad}(u \oplus u)$-admissible. 

Towards a contradiction, assume that $(C^*(j(A_4)), j)$ is $\text{ad}(u \oplus u)$-admissible. Then there exists a $*$-automorphism $\beta: C^*(j(A_4)) \to C^*(j(A_4))$ such that $\beta\circ j = j\circ \text{ad}(u \oplus u)$. Observe that we can write $E_{14}\oplus 0 \in C^*(j(A_4))$ as
\begin{align*}
E_{14} \oplus 0 = (E_{13}\oplus 0)(E_{32}\oplus 0)(E_{24} \oplus 0) = j(E_{13})j(E_{23})^*j(E_{24}).
\end{align*}
Hence, we can write $0 \oplus E_{14}$ as $$0\oplus E_{14} =E_{14}\oplus E_{14}-E_{14}\oplus 0= j(E_{14})-j(E_{13})j(E_{23})^*j(E_{24}).$$ Since we assumed $\beta\circ j = j\circ \text{ad}(u \oplus u)$, we must have
\begin{align*}
\beta(0\oplus E_{14}) &= \beta(j(E_{14})-j(E_{13})j(E_{23})^*j(E_{24}))\\
&=\beta(j(E_{14}))-\beta(j(E_{13}))\beta(j(E_{23}))^*\beta(j(E_{24}))\\
&= j(\text{ad}(u \oplus u)E_{14})-j(\text{ad}(u \oplus u)E_{13})j(\text{ad}(u \oplus u)E_{23})^*j(\text{ad}(u \oplus u)E_{24})\\
&=j(E_{23})-j(E_{24})j(E_{14})^* j(E_{13})\\
&=(E_{23}\oplus 0)-(E_{24}\oplus 0)(E_{41}\oplus E_{41})(E_{13}\oplus 0)\\
&=E_{23}\oplus 0 - E_{23}\oplus 0\\
&=0.
\end{align*}
But $\beta$ is injective so we have a contradiction. Therefore,  $(C^*(j(A_4)), j)$ is not $\text{ad}(u \oplus u)$-admissible.
\end{example}

In Example \ref{ex:fdnotadmiss}, the algebra is finite-dimensional and being acted upon by the smallest nontrivial group. Moreover, the C*-envelope is simple so the action of the group on the given algebra is inner in its C*-envelope. Since admissibility fails for such an ``uncomplicated'' dynamical system, we really shouldn't expect admissibility of a C*-cover in general.


\section{Decomposition of Complete Isometries and Recovering Dynamics Using Partial Actions}\label{sec:partial-actions}


Admissibility failed for the C*-cover in Example \ref{ex:fdnotadmiss} since the generating complete isometry decomposed into the direct sum of a complete isometry and a proper complete contraction. We will see in this section that this decomposition \textit{always} happens when the Shilov boundary ideal is maximal and not essential. An ideal $I$ in a C*-algebra $\Cc$ is \textit{essential} when $I$ intersects each nonzero ideal of $\Cc$ nontrivially. Equivalently, $I$ is essential in $\Cc$ if the annihilator ideal for $I$ in $\Cc$ given by $I^\perp = \set{x \in \Cc : xI=0}$ is trivial.

Though the decomposition of the generating representation in this way seems to be an obstruction for admissibility (as seen in Example \ref{ex:fdnotadmiss}), we will see that there is a natural partial action that allows us to recover dynamics for a class of C*-covers, even in the case that our C*-cover is not admissible. We begin with a lemma.

\begin{lemma}\label{lem:annih-is-Ce}
Let $(\Cc,j)$ be a C*-cover for $\A$ such that the Shilov boundary ideal $\J$ for $\A$ in $\Cc$ is maximal and not essential. Then $\mathcal{J}^\perp$ is $*$-isomorphic to $\Ce{\A}$.
\begin{proof}
Since $\mathcal{J}$ is not essential, $\mathcal{J}^\perp$ is a nontrivial closed two-sided ideal in $\mathcal{C}$. As $\mathcal{J}^\perp \cap \mathcal{J}=0$, we have $\mathcal{J}^\perp$ is $*$-isomorphic to a closed two-sided ideal of $\Ce{\A}\isom \Cc/\J$.  
But $\Ce{\A}$ is simple since $\J$ is maximal so we must have $\mathcal{J}^\perp \isom \Ce{\A}$. 
\end{proof}
\end{lemma}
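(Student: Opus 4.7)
The plan is to view $\J^\perp$ through the quotient map $q : \Cc \to \Cc/\J$, which by the definition of the Shilov boundary ideal satisfies $\Cc/\J \cong \Ce{\A}$, and argue that the restriction $q\vert_{\J^\perp}$ is a $*$-isomorphism onto its image, which must coincide with the whole quotient. The two assumptions on $\J$ each play a distinct role: maximality of $\J$ forces $\Ce{\A}$ to be simple, while non-essentiality of $\J$ guarantees $\J^\perp \neq 0$.

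First, I would recall that for any closed two-sided ideal $I$ of a C*-algebra $\mathcal{C}$, the annihilator $I^\perp$ is itself a closed two-sided ideal, and $I \cap I^\perp = 0$. For the latter, if $a \in \J \cap \J^\perp$, then since $\J$ is self-adjoint we have $a^* \in \J$, hence $aa^* = 0$ by the annihilator property, which forces $a = 0$ by the C*-identity. Consequently the restriction $q\vert_{\J^\perp} : \J^\perp \to \Cc/\J$ is an injective $*$-homomorphism and is therefore automatically isometric, so its image is a closed $*$-subalgebra of $\Cc/\J$.

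Next, I would observe that $q(\J^\perp)$ is in fact a closed two-sided ideal of $\Cc/\J$. This is immediate from the fact that $\J^\perp \nsub \Cc$ and $q$ is a surjective $*$-homomorphism: for any $c \in \Cc$ and $b \in \J^\perp$, one has $cb, bc \in \J^\perp$, so $q(c)q(b) = q(cb)$ and $q(b)q(c) = q(bc)$ lie in $q(\J^\perp)$. Maximality of $\J$ implies $\Cc/\J$ is simple, so $q(\J^\perp)$ is either $0$ or all of $\Cc/\J \cong \Ce{\A}$. The first option would give $\J^\perp \subseteq \ker q = \J$, hence $\J^\perp = \J^\perp \cap \J = 0$, contradicting non-essentiality of $\J$. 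Therefore $q(\J^\perp) = \Cc/\J$, and $q\vert_{\J^\perp}$ is the desired $*$-isomorphism onto $\Ce{\A}$.

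No step here looks genuinely hard, so the main thing to be careful about is the justification that $q(\J^\perp)$ is closed without assuming any general closed-image theorem: this comes for free from injectivity of $q\vert_{\J^\perp}$ via the standard fact that injective $*$-homomorphisms of C*-algebras are isometric. The other potential pitfall is conflating ``non-essential'' (meaning $\J^\perp \neq 0$) with some stronger statement; as stated, it is exactly the input needed to rule out the trivial case in the simplicity dichotomy.
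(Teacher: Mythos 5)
Your argument is correct and is essentially the paper's own proof, just with the details spelled out: both use $\J \cap \J^\perp = 0$ to embed $\J^\perp$ as a closed two-sided ideal of $\Cc/\J \cong \Ce{\A}$, then invoke simplicity of the quotient (from maximality of $\J$) together with $\J^\perp \neq 0$ (from non-essentiality) to conclude the embedding is onto. The extra care you take in justifying that $q(\J^\perp)$ is a closed ideal via isometry of injective $*$-homomorphisms is exactly the right way to fill in what the paper leaves implicit.
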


We give sufficient conditions for a C*-cover decomposing as the direct sum of an always admissible C*-subcover and an ideal that vanishes in the quotient.

\begin{theorem}\label{prop:decomp-simple-case}
Let $(\Cc, j)$ be a C*-cover for $\A$ such that the Shilov boundary ideal $\mathcal{J}$ for $\A$ in $\Cc$ is maximal and not essential. Then 
\begin{enumerate}
\item $\Cc \isom \Ce{\A}\oplus \J$, and
\item $j=j_1 + j_2$, where $(\Ce{\A}\oplus 0, j_1)$ is a C*-cover for $\A$ and $j_2:\A \to 0 \oplus \J$ is a proper completely contractive homomorphism.
\end{enumerate}

\begin{proof}
Since $\J$ is maximal and not essential, Lemma \ref{lem:annih-is-Ce} says $\J^\perp \isom \Ce{\A}$. Thus, there exists a unital completely isometric homomorphism $\varphi: \A \to \J^\perp$ such that $(\J^\perp, \varphi)$ is a C*-cover for $\A$. By the GNS theorem, there exists a non-degenerate faithful $*$-representation $\pi$ of $\mathcal{J}^\perp$ on some Hilbert space $\Hil$. By Theorem II.7.3.9 in \cite{Blackadar-en}, $\pi$ extends uniquely to a $*$-homomorphism $\tilde{\pi}$ from $\mathcal{C}$ to $\BH$, which yields the following diagram. Moreover, $\ker \tilde{\pi} = \lp\mathcal{J}^\perp\rp^\perp$ since $\pi$ is faithful so $\J$ is contained in $\ker\tilde{\pi}$ and $\ker\tilde{\pi}/\mathcal{J}$ is a closed two-sided ideal in $\mathcal{C}/\mathcal{J} \isom \Ce{\A}$.
\begin{center}
\begin{tikzcd}
& \mathcal{C} \arrow[dashed]{dr}{\tilde{\pi}}& \\
\A \arrow{ur}{j} \arrow{r}[swap]{\varphi} & \mathcal{J}^\perp\arrow{u}{}\arrow{r}[swap]{\pi} & \BH
\end{tikzcd}
\end{center}
As $\J$ is maximal, we must have $\ker\tilde{\pi}/\mathcal{J}$ is trivial, i.e. $\ker\tilde{\pi}=\mathcal{J}$ or $\ker\tilde{\pi}=\mathcal{C}$. Since $\tilde{\pi}\vert_{\mathcal{J}^\perp} = \pi$ is a faithful $*$-homomorphism, $\tilde{\pi}$ is completely isometric on $\varphi(\A)$ in $\J^\perp$. Hence, we must have $\lp\J^\perp\rp^\perp=\ker\tilde{\pi}=\mathcal{J}$. 

Set $p:=\varphi(1_\A)$. Then $p$ is a central projection in $\Cc$ and  $\varphi:\A \to \J^\perp$ is completely isometric. Similarly, $p^\perp:= 1_{\Cc}-p$ is a central projection in $\Cc$ and a unit for $\J$. Thus, we obtain the decomposition $$\Cc = p\Cc p \oplus p^\perp \Cc p^\perp = \J^\perp \oplus \J.$$ By identifying $\J^\perp$ with $\Ce{\A}$, we obtain the decomposition in (i) of the theorem.

Our goal for (ii) is to decompose $j:\A \to \Ce{\A}\oplus \J$ as the sum of a completely isometric homomorphism and proper completely contractive homomorphism. Define $L_p:\Cc \to \Ce{\A}\oplus 0$ by $$L_p(c) = pc.$$ Since $p$ is a central projection in $\Cc$, $L_p$ is a $*$-homomorphism of $\Cc$ onto $\Ce{\A}$ and one can show that $L_p\circ j: \A \to \Ce{\A}\oplus 0$ is completely isometric. 

Set $j_1:=L_p\circ j$ and define $j_2: \A \to 0\oplus \J$ by $j_2(a) = p^\perp j(a)$ to obtain the decomposition in (ii). 
\end{proof}
\end{theorem}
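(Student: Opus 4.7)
My plan is to proceed in two phases: first establish the C*-algebra decomposition in (i), and then use the resulting direct-sum structure to split $j$ as $j_1 + j_2$.

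For (i), Lemma \ref{lem:annih-is-Ce} already supplies a $*$-isomorphism $\J^\perp \isom \Ce{\A}$, so it suffices to prove the internal direct-sum decomposition $\Cc = \J \oplus \J^\perp$. The intersection $\J \cap \J^\perp = 0$ is immediate from the definition of the annihilator. For the sum, I would pass to the quotient $\Cc/\J$: since $\J$ is maximal, $\Cc/\J$ is simple, and under the quotient map $q_\J$ the image of $\J^\perp$ is a closed two-sided ideal of this simple algebra. Non-essentiality of $\J$ gives $\J^\perp \neq 0$, and $\J^\perp \cap \J = 0$ ensures $q_\J(\J^\perp)$ is nonzero, so simplicity forces $q_\J(\J^\perp) = \Cc/\J$, i.e.\ $\Cc = \J + \J^\perp$. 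Combined with the trivial intersection, this yields the C*-algebra decomposition $\Cc \isom \J^\perp \oplus \J \isom \Ce{\A}\oplus \J$.

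For (ii), the direct-sum decomposition lets me uniquely write $j(a) = j_1(a) + j_2(a)$ with $j_1(a) \in \Ce{\A}\oplus 0$ and $j_2(a) \in 0\oplus\J$. The component maps are compressions of $j$ by the central projections onto each summand (living in the multiplier algebra of $\Cc$ if $\Cc$ is non-unital), hence completely contractive homomorphisms. Under the identification $\J^\perp \isom \Cc/\J$, the map $j_1$ coincides with $q_\J\circ j$, which is completely isometric by the boundary property of $\J$; hence $(\Ce{\A}\oplus 0, j_1)$ is a C*-cover for $\A$. The map $j_2$ is completely contractive, and it is proper since the norm $\norm{j(a)} = \max\{\norm{j_1(a)},\norm{j_2(a)}\} = \norm{a}$ is already realized by $\norm{j_1(a)}$, leaving no room for $j_2$ to be isometric outside the degenerate case $\J = 0$.

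The main obstacle I anticipate is the first step: bridging the purely algebraic annihilator $\J^\perp$ with the topological and order-theoretic data carried by the Shilov boundary ideal $\J$ to force $\J + \J^\perp = \Cc$. My route goes through simplicity of the quotient $\Cc/\J$; an alternate route uses the fact that sums of closed two-sided ideals in a C*-algebra are closed and then appeals directly to the maximality of $\J$ (which must contain any proper ideal properly containing it). Once this decomposition is in place, the rest is a routine manipulation with central projections.
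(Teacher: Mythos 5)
Your argument is correct, but it reaches part (i) by a genuinely more elementary route than the paper. The paper establishes $\Cc = \J^\perp \oplus \J$ by taking a faithful non-degenerate representation $\pi$ of the ideal $\J^\perp$ (via GNS), extending it to $\tilde{\pi}$ on all of $\Cc$, and using maximality of $\J$ to pin down $\ker\tilde{\pi} = (\J^\perp)^\perp = \J$; the decomposition then comes from the central projection $p = \varphi(1_\A)$. You instead observe that $q_\J(\J^\perp)$ is a nonzero closed two-sided ideal of the simple algebra $\Cc/\J$, hence all of it, so $\J + \J^\perp = \Cc$; together with $\J \cap \J^\perp = 0$ this gives the internal direct sum immediately. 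This is really just the argument of Lemma \ref{lem:annih-is-Ce} pushed one step further, and it avoids GNS and the representation-extension theorem entirely; what the paper's longer route buys is the explicit identity $(\J^\perp)^\perp = \J$, which your approach does not record but which is not needed for the statement. Your treatment of (ii) — identifying $j_1 = pj(\cdot)$ with $q_\J \circ j$ under the isomorphism $\J^\perp \isom \Cc/\J$, whence complete isometry from the boundary property — matches the paper and actually supplies the justification the paper leaves as ``one can show.''

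One weak spot: your reason that $j_2$ is \emph{proper} (``the max is already realized by $\norm{j_1(a)}$, leaving no room for $j_2$'') does not actually rule out $\norm{j_2(a)} = \norm{a}$ holding as well. The correct one-line argument is: if $j_2 = (1-p)j$ were completely isometric, then $\J^\perp$ would be a boundary ideal for $\A$ in $\Cc$, hence contained in the Shilov ideal $\J$; since $\J^\perp \cap \J = 0$ this forces $\J^\perp = 0$, contradicting non-essentiality. (The paper's own proof also glosses over this point, so this is a fix worth making rather than a defect unique to your write-up.)
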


To extend dynamics in the context of Theorem \ref{prop:decomp-simple-case}, we introduce a partial action on the C*-algebra generated by the decomposed representation of $\A$. See R. Exel's textbook \cite{Exel-1} for a comprehensive resource on the theory of partial actions.

\begin{theorem}\label{thm:partial-action}
Let $G$ be a discrete group and let $(\A, G, \al)$ be a dynamical system. Suppose $(\mathcal{C},j)$ is a  C*-cover for $\mathcal{A}$ such that the Shilov boundary ideal $\J$ for $\mathcal{A}$ in $\mathcal{C}$ is maximal and not essential. If $G$ is amenable or $\A$ is Dirichlet, then there exists a partial action $\theta: G \curvearrowright \mathcal{C}$ and a norm closed subalgebra $\B \subseteq \mathcal{C}\semi_\theta G$ such that $\B$ is completely isometrically isomorphic to $\A\semi_\al G.$
\begin{proof}
By Theorem \ref{prop:decomp-simple-case}, we have $\Cc = \Ce{\A}\oplus \J$. Moreover, there exists a central projection $p \in \Cc$ such that $j=j_1 + j_2$, where $j_1:\A \to \Ce{\A}\oplus 0$ is a completely isometric homomorphism given by $j_1(a) = pj(a)$ for all $a \in \A$. 

For each $s \in G$, define $$D_s = \begin{cases} \Cc & \text{if $s = e$}\\ \Ce{\A}\oplus 0 & \text{if $s \ne e$}\end{cases},$$ where $e$ is the identity element in $G$. As $(\Ce{\A}\oplus 0, j_1)\simeq(\Ce{\A},\imin)$ is an $\al$-admissible C*-cover for $\A$, there exists an action $\beta:G \to \Aut(\Ce{\A}\oplus 0)$ such that $\beta_s\circ j_1 = j_1\circ \al_s$ for all $s \in G$. Hence, for each $s \in G$, we can define $\theta_s: D_{s^{-1}}\to D_s$ by $$\theta_s = \begin{cases} \text{id}_{\Cc} & \text{if $s = e$}\\ \beta_s & \text{if $s \ne e$}\end{cases}.$$ Thus, $\theta = \set{\set{\theta_s}_{s\in G}, \set{D_s}_{s\in G}}$ is a partial action of $G$ on $\Cc$.

Our construction yields two crossed products of C*-algebras, namely the global C*-crossed product $(\Ce{\A}\oplus 0)\semi_\beta G$ and the partial C*-crossed product $\Cc\semi_\theta G$. If $G$ is amenable or $\A$ is Dirichlet, Theorem 3.14 or 5.5 in \cite{Kat-1} yields all relative crossed products of $\A$ by $G$ coincide via a complete isometry that maps generators to generators. In particular, this implies that the full crossed product $\A \semi_\al G$ is completely isometrically isomorphic to the closure of $C_c(G, j(\A))$ in $(\Ce{\A}\oplus 0)\semi_\beta G$. Moving forward, we will identify $\A\semi_\al G$ with this subalgebra of $(\Ce{\A}\oplus 0)\semi_\beta G$. 

Recall that there exists a canonical unitary representation $U:G \to (\Ce{\A}\oplus 0)\semi_\beta G$ and observe that the map $\pi:\Cc \to (\Ce{\A}\oplus 0)\semi_\beta G$ given by $\pi(x)=(px)U_e$ is a $*$-homomorphism. To see that $(\pi, U)$ is a covariant representation, fix $e \ne s \in G$ and let $j_1(a) \in D_{s^{-1}}=\Ce{\A}\oplus 0$ be given. Since $p$ is the unit for $\Ce{\A}\oplus 0$, we have $px=x$ for all $x \in \Ce{\A}\oplus 0$, and it follows that
\begin{align*}
U_s\pi(j_1(a))U_s^* = U_s((pj_1(a))U_e)U_s^* = \beta_s(pj_1(a))U_e = p\beta_s(j_1(a))U_e = \pi(\theta_s(j_1(a))).
\end{align*}
Since $j_1(\A)$ generates $D_{s^{-1}}=\Ce{\A}\oplus 0$ as a C*-algebra, we get $U_s\pi(x)U_s^* = \pi(\theta_s(x))$ for all $x \in D_{s^{-1}}$. It is clear that $U_e\pi(x)U_{e^{-1}}=\pi(\theta_e(x))$ by definition of $\theta$. Hence, $(\pi, U)$ is a covariant representation of $\theta$ in $(\Ce{\A}\oplus 0)\semi_\beta G$. Thus, there exists an integrated form $\pi \semi U: \Cc\semi_\theta G \to (\Ce{\A}\oplus 0)\semi_\beta G$ such that $(\pi\semi U)(x\delta_s) = \pi(x)U_s$ for all $x \in \Cc, s \in G$.

Define $\varphi: j_1(\A) \to \Cc\semi_\theta G$ by $\varphi(j_1(a)) = j_1(a)\delta_e$, and define $V:G \to \Cc\semi_\theta G$ by $V_s = 1_s\delta_s$, where $1_s = p$ when $s\ne e$ and $1_e = 1_\Cc$. Fix $e \ne s \in G$. Recall that the $\al$-admissibility of $(\Ce{\A}\oplus 0, j_1)$ yields $\beta_s\circ j_1 = j_1 \circ \al_s$. Thus, for each $a \in \A$, we have
\begin{align*}
V_s\varphi(j_1(a))V_{s^{-1}} &= (p\delta_s)(j_1(a)\delta_e)(p\delta_{s^{-1}})
=(\beta_s(\beta_{s^{-1}}(p)j_1(a))\delta_s)(p\delta_{s^{-1}})\\
&=\beta_s(\beta_{s^{-1}}(p)j_1(a)p)\delta_e
=\beta_s(pj_1(a)p)\delta_e
=\beta_s(j_1(a))\delta_e\\
&=j_1(\al_s(a))\delta_e = \varphi(j_1(\al_s(a))) = (\varphi\circ j_1)(\al_s(a)).
\end{align*}

A similar computation shows $V_e\varphi(j_1(a))V_e=(\varphi\circ j_1)(\al_e(a))$ for all $a \in \A$. Since $\varphi \circ j_1$ is also completely contractive, $(\varphi\circ j_1, V)$ is a non-degenerate covariant representation of $(\A, G, \al)$ in $\Cc\semi_\theta G$. Thus, Proposition 3.7 in \cite{Kat-1} yields a completely contractive integrated form $(\varphi\circ j_1) \semi V$ of the full crossed product $\A\semi_\al G$ to $\Cc\semi_\theta G$ given by $$((\varphi\circ j_1)\semi V)(j_1(a)U_s) = \varphi(j_1(a))V_s=j_1(a)\delta_s.$$ 

Set $\B = \overline{\bigoplus_{s\in G} D_s \cap j_1(\A)} \subseteq \Cc\semi_\theta G$ and note that $((\varphi\circ j_1)\semi V)(\A\semi_\al G) \subseteq \B$ and $(\pi\semi U)(\B)\subseteq \A\semi_\al G$. We claim that $(\varphi\circ j_1) \semi V$ and $\pi \semi U\vert_{\B}$ are completely contractive inverses.

Let $B=\sum_{s\in G} j_1(a_s)\delta_s \in \B$ be given. Observe that 
\begin{align*}
((\varphi\circ j_1)\semi V)\lp (\pi \semi U)\lp B \rp\rp &= ((\varphi\circ j_1)\semi V)\lp \sum_{s\in G} \pi(j_1(a_s))U_s\rp\\
&= ((\varphi\circ j_1)\semi V)\lp \sum_{s\in G} pj_1(a_s)U_s\rp\\
&= ((\varphi\circ j_1)\semi V)\lp \sum_{s\in G} j_1(a_s)U_s\rp\\
&= \sum_{s\in G} \varphi(j_1(a_s))V_s\\
&=\sum_{s\in G}j_1(a_s)\delta_s\\
&=B.
\end{align*}

Similarly, if $A=\sum_{s\in G} j_1(a_s)U_s \in \A\semi_{\al} G$, then we have
\begin{align*}
(\pi \semi U)\lp((\varphi\circ j_1)\semi V)(A)\rp &= (\pi \semi U)\lp \sum_{s\in G} j_1(a_s)\delta_s\rp \\
&=\sum_{s\in G}\pi(j_1(a_s))U_s\\
&=\sum_{s\in G}pj_1(a_s)U_s\\
&=\sum_{s\in G}j_1(a_s)U_s\\
&=A.
\end{align*}

Thus, the integrated forms $\varphi \semi V$ and $\pi \semi U\vert_{\B}$ are completely contractive inverses on dense subsets of $\A \semi_\al G$ and $\B$. Hence, by continuity, $\varphi \semi V$ and $\pi \semi U\vert_{\B}$ are completely contractive inverses, and it follows that $\A\semi_\al G \isom \B$.
\end{proof}
\end{theorem}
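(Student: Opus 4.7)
My plan is to use the decomposition furnished by Theorem \ref{prop:decomp-simple-case} to transport the dynamics onto the $\Ce{\A}$-summand of $\Cc$ and declare everything outside this summand to be the ``partial'' portion of a new action. Writing $\Cc \isom \Ce{\A}\oplus \J$ with central projection $p$ corresponding to the first summand and $j_1(a) = pj(a)$ the completely isometric component of $j$, I would set $D_e = \Cc$ and $D_s = \Ce{\A}\oplus 0$ for $s \ne e$, and define $\theta_s$ to be the identity for $s=e$ and the unique $\al$-extension $\beta_s \in \Aut(\Ce{\A}\oplus 0)$ for $s\ne e$. Since $D_s = D_{s^{-1}}$ whenever $s\ne e$ and $\beta$ is a group action on $\Ce{\A}\oplus 0$, the family $\theta = \{(\theta_s, D_s)\}_{s\in G}$ is manifestly a partial action of $G$ on $\Cc$.

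Next I would exploit the amenability (or Dirichlet) hypothesis via Theorems 3.14 and 5.5 of \cite{Kat-1} to identify $\A \semi_\al G$ completely isometrically with the closure of $C_c(G, j_1(\A))$ inside the global crossed product $(\Ce{\A}\oplus 0)\semi_\beta G$. The candidate subalgebra inside the partial crossed product is $\B := \overline{\bigoplus_{s\in G}(D_s\cap j_1(\A))\delta_s} \subseteq \Cc\semi_\theta G$, consisting of functions supported within the appropriate domain fibers.

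To prove $\B \isom \A\semi_\al G$ I would build completely contractive integrated forms in both directions. For the forward direction, I would construct a non-degenerate covariant representation $(\varphi\circ j_1, V)$ of $(\A, G, \al)$ inside $\Cc\semi_\theta G$, where $\varphi(j_1(a)) = j_1(a)\delta_e$ and $V_s = 1_s\delta_s$ (with $1_s = p$ for $s\ne e$ and $1_\Cc$ for $s=e$). Covariance reduces to $V_s(j_1(a)\delta_e)V_{s^{-1}} = \beta_s(pj_1(a)p)\delta_e = j_1(\al_s(a))\delta_e$, and Proposition 3.7 of \cite{Kat-1} then yields the integrated form $(\varphi\circ j_1)\semi V: \A\semi_\al G \to \Cc\semi_\theta G$ sending $j_1(a)U_s$ to $j_1(a)\delta_s$. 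For the reverse, I would construct a covariant representation $(\pi, U)$ of $\theta$ inside $(\Ce{\A}\oplus 0)\semi_\beta G$ by $\pi(x) = (px)U_e$, using the canonical unitary implementation $U$ that accompanies $\beta$. A direct computation on elementary tensors verifies that the two integrated forms are mutual inverses on the dense subalgebras, and continuity then gives a complete isometry between $\B$ and $\A\semi_\al G$.

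The most delicate point will be ensuring that the compression by $p$ in the definition of $\pi$ interacts correctly with the partial action structure, so that $(\pi, U)$ really defines a covariant pair in the partial-crossed-product sense; this hinges on $p$ being central in $\Cc$ and being the unit for each $D_s$ with $s \ne e$. A secondary subtlety is that the universal property used to obtain $(\varphi\circ j_1)\semi V$ is exactly the one that is guaranteed to respect the choice of $\al$-admissible C*-cover only under the amenability or Dirichlet hypothesis, so that step is precisely where those hypotheses are consumed. Once covariance and inverse relations are checked on generators, continuity and the density of $C_c(G, j_1(\A))$ inside both crossed products close the argument.
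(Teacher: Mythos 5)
Your proposal is correct and follows essentially the same route as the paper's proof: the same partial action $\theta$ with $D_e=\Cc$ and $D_s=\Ce{\A}\oplus 0$ otherwise, the same identification of $\A\semi_\al G$ inside $(\Ce{\A}\oplus 0)\semi_\beta G$ via the amenability/Dirichlet hypothesis, and the same pair of covariant representations $(\varphi\circ j_1, V)$ and $(\pi,U)$ whose integrated forms are checked to be mutually inverse complete contractions on dense subalgebras. The delicate points you flag (centrality of $p$ and where the amenability/Dirichlet hypothesis is consumed) are exactly the ones the paper's argument relies on.
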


Though the assumptions on the Shilov boundary ideal in Theorem \ref{thm:partial-action} are strong, the theorem can still be applied to many important examples in the literature. Indeed, Theorem \ref{thm:partial-action} applies to any operator algebra whose C*-envelope is simple. In particular, we can revisit Example \ref{ex:fdnotadmiss}.

\begin{example}
Let $(A_4, \Z/2\Z, \text{ad}(u \oplus u))$ be the dynamical system from Example \ref{ex:fdnotadmiss}. We showed that $(M_4 \oplus (M_2 \oplus \C), j)$ was not an ad$(u\oplus u)$-admissible C*-cover for the four cycle algebra $A_4$, where $j: A_4 \to M_4 \oplus (M_2 \oplus \C)$ was given by the ucis representation $j(a)=a \oplus S_p(a)$ for all $a \in A_4$ and $S_p:A_4 \to M_2 \oplus \C$ is a completely contractive homomorphism given by Schur multiplication by some positive matrix $p \in M_4$.

The Shilov boundary ideal for $A_4$ in $C^*(j(A_4)) = M_4 \oplus (M_2 \oplus \C^2)$ is $\J = 0\oplus (M_2 \oplus \C^2)$, which is maximal and not essential since $M_4 \oplus 0$ annihilates $\J$. Thus, Theorem \ref{thm:partial-action} applies and says there exists a partial action $\theta:\Z/2\Z \curvearrowright M_4 \oplus (M_2 \oplus \C)$ and a subalgebra $\B$ of the partial C*-crossed product $(M_4 \oplus (M_2 \oplus \C))\semi_\theta \Z/2\Z$ that is completely isometrically isomorphic to $\A_4 \semi_{\text{ad}(u\oplus u)} \Z/2\Z$. Working through the construction of Theorem \ref{thm:partial-action}, we can see that $$(M_4 \oplus (M_2 \oplus \C))\semi_\theta \Z/2\Z \isom (M_4 \semi_{\text{ad}(u\oplus u)} \Z/2\Z)\oplus (M_2 \oplus \C) \isom (M_4 \oplus M_4)\oplus (M_2 \oplus \C).$$ Moreover, we have $\B = (A_4 \semi_{\text{ad}(u\oplus u)} \Z/2\Z) \oplus 0$. Therefore, despite the C*-cover being non-admissible, we can extend the original dynamics to the C*-cover so long as we consider partial actions.
\end{example}

Example \ref{ex:fdnotadmiss} and the example presented in Proposition 2.1 of \cite{Kat-3} are the only examples of non-admissible C*-covers in the literature. The partial action construction in Theorem \ref{thm:partial-action} applies to both of these examples. 

\begin{example}
Consider the M\"{o}bius transformation $\tau(w)= \displaystyle\frac{w-\frac{1}{2}}{1-\frac{w}{2}}$, which is a conformal mapping of $\D$ onto $\D$, $\T$ onto $\T$, and preserves orientation. Thus, $\al_1: f\mapsto f\circ \tau$ defines a completely isometric automorphism of $A(\D)$ and induces an action of $\Z$ on $A(\D)$ that yields the dynamical system $(A(\D), \Z, \al)$.

In Proposition 2.1 of \cite{Kat-3}, E. Katsoulis and C. Ramsey show that $(C(\T)\oplus M_2, i)$, where $i: z \mapsto z\oplus \begin{bmatrix*}[r]0 & 0\\ 1 & 0\end{bmatrix*}$, is a C*-cover for $A(\D)$ that is not $\al$-admissible. However, observe that the Shilov boundary ideal for $A(\D)$ in $C(\T)\oplus M_2$ is $\mathcal{J} = 0 \oplus M_2$, which is neither maximal nor essential since $C(\T)\oplus 0 \isom \Ce{A(\D)}$ annihilates $\J$. Using the partial action constructed in Theorem \ref{thm:partial-action}, we can compute the partial C*-crossed product $$\lp C(\T)\oplus M_2\rp \semi_\theta \Z \isom \lp C(\T)\semi_\al \Z\rp \oplus M_2$$ and see that $\B = (\A(\D) \semi_\al \Z)\oplus 0$, which is completely isometrically isomorphic to $A(\D)\semi_\al \Z.$
\end{example}

Given that the Shilov boundary ideal is not essential in every known example of a non-admissible C*-cover, we might ask the following question: does the Shilov boundary ideal being essential in a C*-cover determine if that C*-cover is always admissible? We show that the Shilov boundary ideal being essential is not necessary for the C*-cover to be always admissible.

\begin{proposition}\label{prop:not-essential-always-admiss}
There exist C*-covers for $T_2$ where the Shilov boundary ideal is not essential.
\begin{proof}
Define $j: T_2 \to M_2 \oplus M_2$ by $$j\lp\begin{bmatrix*}[c]a & b\\ 0 & c\end{bmatrix*}\rp= \begin{bmatrix*}[c]a & b\\ 0 & c\end{bmatrix*}\oplus \begin{bmatrix*}[c]a & 0\\ 0 & c\end{bmatrix*}.$$ It is easy to see that $j$ is a completely isometric representation of $T_2$ and $C^*(j(T_2)) \isom M_2 \oplus \C^2$ by a matrix unit argument. Hence, $( M_2 \oplus \C^2, j)$ is a C*-cover for $T_2$. It follows that $( M_2 \oplus \C^2, j)$ is always admissible for $T_2$ by Corollary \ref{cor:T2-always-admiss}, yet the Shilov boundary ideal $\J = 0 \oplus \C^2$ in $M_2 \oplus \C^2$ is not essential since it has trivial intersection with the ideal $M_2 \oplus 0$.
\end{proof}
\end{proposition}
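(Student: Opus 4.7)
The plan is to exhibit a concrete C*-cover for $T_2$ whose Shilov boundary ideal is visibly cut out by a central projection, so that essentiality fails automatically, and then invoke Corollary \ref{cor:T2-always-admiss} to conclude that this cover is always admissible (establishing that non-essentiality of the Shilov boundary ideal is compatible with always admissibility).

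The construction I have in mind is to add a harmless self-adjoint summand to the natural inclusion $T_2 \hookrightarrow M_2$. Concretely, let $\Delta:T_2 \to \C^2 \subseteq M_2$ be the conditional expectation onto the diagonal, i.e.\ $\Delta\lp\begin{bmatrix}a & b\\ 0 & c\end{bmatrix}\rp=\begin{bmatrix}a & 0\\ 0 & c\end{bmatrix}$. A direct matrix computation shows $\Delta$ is multiplicative on $T_2$, and since $\Delta$ is a unital completely positive projection, it is completely contractive. Define $j:T_2\to M_2\oplus M_2$ by $j(x)=x\oplus \Delta(x)$. Because the first coordinate is already completely isometric (the inclusion $T_2\hookrightarrow M_2$ realizes $\Ce{T_2}$, as $M_2$ is simple), $j$ is completely isometric, and so $(C^*(j(T_2)),j)$ is a C*-cover for $T_2$.

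Next I would compute $C^*(j(T_2))$ by chasing matrix units. From $j(E_{12})=E_{12}\oplus 0$ we get $E_{12}\oplus 0$ and hence $E_{11}\oplus 0, E_{22}\oplus 0$ in the generated C*-algebra; subtracting from $j(E_{11})=E_{11}\oplus E_{11}$ and $j(E_{22})=E_{22}\oplus E_{22}$ yields $0\oplus E_{11}$ and $0\oplus E_{22}$. Thus $C^*(j(T_2))\cong M_2\oplus \C^2 \subseteq M_2\oplus M_2$. Now the Shilov boundary ideal $\J$ is the kernel of the quotient onto $\Ce{T_2}\cong M_2$; since projection onto the first summand is completely isometric on $j(T_2)$ and produces $M_2$, this ideal must be $\J = 0\oplus \C^2$.

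Finally, I would observe that the ideal $M_2\oplus 0$ is a nonzero ideal of $C^*(j(T_2))$ whose intersection with $\J = 0\oplus \C^2$ is $0$; equivalently, $M_2\oplus 0 \subseteq \J^\perp$, so $\J$ is not essential. By Corollary \ref{cor:T2-always-admiss}, every C*-cover for $T_2$ is always admissible, so $(M_2\oplus \C^2,j)$ is always admissible with non-essential Shilov boundary ideal, completing the proof. I do not expect any serious obstacle here: the only mild bookkeeping is the matrix-unit identification of $C^*(j(T_2))$ and confirming that the canonical quotient onto $M_2$ realizes the C*-envelope, both of which are routine.
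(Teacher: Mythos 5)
Your construction is exactly the one the paper uses: $j(x)=x\oplus\Delta(x)$ with $\Delta$ the diagonal conditional expectation, yielding $C^*(j(T_2))\cong M_2\oplus\C^2$ with Shilov boundary ideal $0\oplus\C^2$ annihilated by $M_2\oplus 0$, followed by the appeal to Corollary \ref{cor:T2-always-admiss}. The argument is correct and matches the paper's proof, with somewhat more detail in the matrix-unit bookkeeping.
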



\bibliographystyle{amsalpha}
\bibliography{ref}
\end{document}